\newcommand{\myquad}[1][1]{\hspace*{#1em}\ignorespaces}
\newcounter{cprop}[section]
\renewcommand{\thecprop}{\thesection.\arabic{cprop}}
\newtheorem{theorem}[cprop]{Theorem}
\theoremstyle{plain}
\newtheorem{corollary}[cprop]{Corollary}
\newtheorem{lemma}[cprop]{Lemma}
\newtheorem{proposition}[cprop]{Proposition}
\newtheorem{assumption}[cprop]{Assumption}
\newtheorem{question}[cprop]{Question}
\numberwithin{equation}{section}
\theoremstyle{definition}
\newtheorem{definition}[cprop]{Definition}
\newtheorem{example}[cprop]{Example}
\theoremstyle{remark}
\newtheorem{remark}[cprop]{Remark}
\renewcommand{\P}{\mathbb{P}}
\newcommand{\R}{\mathbb{R}}
\newcommand{\N}{\mathbb{N}}
\newcommand{\Z}{\mathbb{Z}}
\renewcommand{\d}{\mathrm{d}}
\newcommand{\vertiii}[1]{{\left\vert\kern-0.25ex\left\vert\kern-0.25ex\left\vert #1 
		\right\vert\kern-0.25ex\right\vert\kern-0.25ex\right\vert}}
\begin{document}
	\title[Invariant manifolds for RDEs]{Invariant manifolds and stability for rough differential equations
	}
	
	\author{M. Ghani Varzaneh}
	\address{Mazyar Ghani Varzaneh\\
		Fachbereich Mathematik und Statistik, Universität Konstanz, Konstanz, Germany}
	\email{mazyar.ghani-varzaneh@uni-konstanz.de}

	\author{S. Riedel}
	\address{Sebastian Riedel \\
		Fakult\"at f\"ur  Mathematik und Informatik, FernUniversit\"at in Hagen, Hagen, Germany}
	\email{sebastian.riedel@fernuni-hagen.de }
	
	\subjclass[2020]{ 60L20, 60L99, 37H10, 37H15, 37H30.}
	\keywords{invariant manifolds, rough differential equations, random dynamical systems, stability}

	\begin{abstract}
		We prove the existence of local stable, unstable, and center manifolds for stochastic semiflows induced by rough differential equations driven by rough paths valued stochastic processes around random fixed points of the equation. Examples include stochastic differential equations driven by a fractional Brownian motion with Hurst parameter $H > \frac{1}{4}$. In case the top Lyapunov exponent is negative, we derive almost sure exponential stability of the solution.
	\end{abstract}
	
	\maketitle
	\section*{Introduction}
	
	Rough paths theory is a solution theory for ordinary differential equations that is rich enough to handle equations that are driven by paths with an arbitrary low H\"older regularity \cite{LCL07, FV10, FH20}. In particular, it can be used to study equations driven by Brownian trajectories and thus opens the possibility to study stochastic differential equations (SDEs) completely pathwise. This clear separation between \emph{probabilistic} aspects of the driving process and the \emph{deterministic} analysis of the equation makes it possible to define solutions to SDEs that are driven by very general driving signals. In particular, in contrast to It\=o's stochastic calculus, rough paths theory allows to study SDEs driven by stochastic processes lacking the martingale property. A famous class of stochastic processes serving as possible driving signals for rough differential equations are Gaussian processes and, most prominently among them, fractional Brownian motions with a Hurst parameter $H > \frac{1}{4}$ \cite{CQ02,FV10-2, FGGR16}. \smallskip
	
	Using SDEs driven by more general processes than Brownian motion can be more realistic in modelling real-world phenomena, but their analysis is significantly more complicated. This is due to the fact that the solution process lacks two properties that are heavily used in classical stochastic analysis: the \emph{martingale-} and the \emph{Markov property}. The question of how results that are known for SDEs driven by a Brownian motion can (or cannot) be generalized to equations driven by a fractional Brownian motion (or even more general Gaussian processes) has attracted many researcher's interest and still is an important, yet challenging problem. \smallskip
	
	One aspect of great interest is the question of how to describe the \emph{long-time behaviour} of the solution to a rough differential equation (RDE) driven by signals more general than Brownian motion. Let us recall that in case of classical SDEs, two core concepts that are used frequently to describe the long-time behaviour are \emph{invariant measures for the Markov semigroup} and \emph{stochastic stability} (see e.g. \cite{Kha12}). However, both concepts are not easy to generalize to rough differential equations. Concerning invariant measures, the lack of the Markov property does not even tell us how an invariant measure should be defined, leaving alone the questions of existence, uniqueness and convergence towards it. In a series of papers, Hairer and coauthors proposed a solution to this problems by generalizing the theory of invariant measures to solutions of equations driven by a fractional Brownian motion \cite{Hai05, HO07, HP11, HP13}. Several researchers adopted his ideas and used it to study related questions within this framework, cf. e.g. \cite{CP11, CPT14, FP17, DPT19, PTV20}. Concerning stochastic stability, there exist only a few works that deal with this problem. For a fractional Brownian motion with Hurst parameter $H > \frac{1}{2}$, this question was studied in \cite{GANS18, DHC19, DH23}. For lower Hurst parameters, we are only aware of the two works \cite{GAS18} and \cite{Duc22} that study the case $H \in (\frac{1}{3},\frac{1}{2})$. \smallskip
	
	Although invariant measures and stochastic stability are two important concepts, much more can be said about the long-time behaviour of SDE solutions by using the concepts of \emph{random dynamical systems} (RDS) \cite{Arn98}. Indeed, RDS offer very fine tools (Lyapunov exponents, invariant manifolds, random attractors...) that allow for a detailed description of the behaviour of SDE solutions. One important applications of RDS is the study of \emph{stochastic bifurcation}, i.e. qualitative changes of the solution that appear when perturbing the coefficients of the equation. For It\=o stochastic differential equations, the use of RDS is well-established. More importantly, RDS are flexible enough to deal with RDEs driven by stochastic processes having stationary increments such as fractional Brownian motions \cite{BRS17}. This makes RDS a perfect tool for the analysis of the long-time behaviour of RDE solutions.  \smallskip
	
	The present article offers a way to study the long-time behaviour of nonlinear RDE solutions by establishing the existence \emph{local random invariant manifolds} around stationary points. An invariant manifold has the property that it is invariant under the solution flow of the RDE. In fact, our main theorems, cf. Theorem \ref{stable_manifold}, Theorem \ref{Local unstable manifolds}, and Theorem \ref{center_manifold}, formulate sufficient conditions for the existence of stable, unstable, and center manifolds around stationary points and describe their properties. It is well-known that stable manifolds are closely related to exponential stability of the solution flow. Indeed, as a by-product of our stable manifold theorem, we can deduce local exponential stability for RDE solutions provided the largest Lyapunov exponent is negative, cf. Corollary \ref{stability}. We discuss an explicit example for which this property holds in Example \ref{STABLE_EXAMPLE}. \smallskip
	
	In the following, we compare our main results to existing theorems in the literature. For It\=o-SDEs, stable and unstable manifolds were established in \cite{MS99} and center manifolds are the topic of \cite{Box89}. Center manifolds for rough differential equations were recently studied in \cite{NK21}. Let us highlight some key features of our main results and how they are related to the results cited above.
	\begin{itemize}
		\item This article is the first that proves stable and unstable manifold theorems for rough differential equations. In particular, we think that the stable manifold theorem is an important result since it implies almost sure exponential stability of the solution provided that the top Lyapunov exponent is negative (cf.  Section \ref{sec:stability} and the discussion below). Compared to \cite{MS99}, note that we do not need to assume that the flow generated by our equation goes backward in time, too, i.e. we can drop the assumption that the cocycle should be injective. Note, however, that we are still able to prove an unstable manifold theorem although we cannot just apply the stable manifold theorem to the time-inversed flow which is a common strategy in the case of an injective cocycle. 
		\item Throughout the paper, we assume that the driving rough paths are geometric $\gamma$-H\"older paths with $\gamma > \frac{1}{4}$. Consequently, our main results can be applied to RDEs driven by a fractional Brownian motions with Hurst parameter $H > \frac{1}{4}$. Note that the authors of \cite{NK21} only consider the case $H > \frac{1}{3}$. Therefore, our paper establishes the first time a center manifold theorem for an RDE driven by a fractional Brownian motion with Hurst parameter $H \in (\frac{1}{4},\frac{1}{3}]$. Working with rough paths of lower H\"older regularity is technically more involved since we have to consider third-order iterated integrals and controlled rough paths with second Gubinelli-derivatives. However, we think that our arguments will even work for RDEs driven by geometric H\"older rough paths having arbitrary low H\"older regularity, but since our main example is the fractional Brownian motion, we refrained working in this generality and to keep the calculations as simple as possible.
		\item We prove the existence of stable, unstable, and center manifolds around stationary points that are allowed to be random. In contrast, the center manifold theorem in \cite{NK21} only applies if the equation has $0$ as a deterministic fixed point (in fact, in \cite{NK21}, it is even assumed that the first derivative of the drift and first and second derivative of the diffusion vector field have $0$ as a fixed point). There are many equations that fail to have deterministic fixed points but admit random ones, cf. the discussion in \cite[pages 15 - 18]{MS99}. The reason why we can formulate a more general result here is that we use the \emph{Mulitplicative Ergodic Theorem} that ensures the existence of Lyapunov exponents in a very general framework.
		\item In \cite{NK21}, the drift parameter in the RDE is assumed to be a linear map plus a Lipschitz continuous nonlinearity. In particular, the drift is assumed to have a linear growth. In many applications, this assumption is too restrictive (for example, it does not allow to study the important case $V_0(z) = -z|z|^2 + z$). Our main results are formulated in a generality that allows drift vector fields with superlinear growth by imposing e.g. only one one-sided growth conditions as formulated in \cite{RS17}.
		\item The stability result that we discuss in Section \ref{sec:stability} differs from those we mentioned above in several regards. First, it is formulated in a generality that allows to apply it to RDEs driven by a fractional Brownian motion with Hurst parameter $H > \frac{1}{4}$. In particular, this is the first stability statement that holds in the regime $H \in (\frac{1}{4}, \frac{1}{3}]$. Second, our assumptions on the equation are less restrictive. For instance, we are still able to prove stability when the derivative of the diffusion part is not necessarily equal to zero at the equilibrium point, cf. \cite[Equation (22)]{GAS18}, but is allowed to fluctuate around it. This, in particular, is interesting for studying possible bifurcations. Third, even for the simple regime $H \in (\frac{1}{3},\frac{1}{2}]$, our proof is much briefer and, we think, more conceptual than those given in \cite{GAS18} and \cite{Duc22}. Fourth, our result can be used to prove stability around any stationary point if some estimation of the Lyapunov exponent is provided.
	\end{itemize}
	\smallskip
	The article is structured as follows. In Section~\ref{sec:RDE}, we first provide background on rough path theory and prove several auxiliary lemmas. We then establish crucial estimates. The main results of this section are formulated in Propositions~\ref{RS17}, \ref{DDSD}, and \ref{ZXZ}, which serve as key tools for proving the principal results of this paper. Section~\ref{manifolds} presents our main contributions, where we introduce random fixed points for cocycles (termed \emph{stationary trajectories}), around which the invariant manifolds are constructed. The main results of this section are Theorem~\ref{stable_manifold}, Theorem~\ref{Local unstable manifolds}, Theorem~\ref{center_manifold}, and Corollary~\ref{stability}. Finally, we conclude the paper with examples that illustrate the applicability of our findings.

	\subsection*{Preliminaries and notation}
	
	In this section, we gather some conventions, notation, and basic definitions, which will be used for the rest of the paper.
	\begin{itemize}
		\item  For all finite Banach spaces, we will use the same notation $\Vert .\Vert$ to denote the norm. Also, For two Banach spaces $U$ and $V$, by $\mathcal{L}(U, V)$, we mean all bounded linear functions from $U$ to $V$  with the usual operator norm.
		\item We will identify $\mathcal{L}(U,\mathcal{L}(V,W))$ with $\mathcal{L}(U\otimes V,W))$ .
		\item  By $C^n_b(V,W)$, we mean the space of bounded functions $G \colon V \to W$ having $n$ bounded derivatives.
		\item  Assume  $I$ is an interval in $\R$ and $U$ be a finite Banach space. A map $ \xi: I\rightarrow U $, will also be called a \emph{path}. For $\xi$, we denote its increment by $ \delta \xi_{s,t}=\xi_{t}-\xi_{s} $ where by $ \xi_{t} $ we mean $ \xi(t) $. We set
		\begin{align*}
			\Vert \xi\Vert_{\infty,I}:=\sup_{s\in I}\Vert \xi_{s}\Vert.
		\end{align*}
		For  $\gamma \in (0,1]$, we define the $ \gamma$-H\"older seminorm by
		\begin{align*}
			\Vert \xi\Vert_{\gamma, I} := \sup_{\substack {s,t\in I\\s\neq t  }} \frac{\Vert \delta\xi_{s,t}\Vert}{\vert t-s\vert^{\gamma}}.
		\end{align*}
		Also, we set $\Vert \xi\Vert_{C^\gamma, I}=\max\lbrace \Vert \xi\Vert_{\infty,I},\Vert \xi\Vert_{\gamma,I}\rbrace$.
		\item Assume $A\in \mathcal{L}(U,W)$ and $B\in U$. By $AB$ we mean $A\circ B\in W$, i.e. the composition of $A$ and $B$. 
		\item  We call $V\colon \mathbb{R}^m\rightarrow \mathcal{L}(\mathbb{R}^d,\mathbb{R}^m)$ a $\text{Lip}^{p}$-vector field, if $V$ be $\lfloor p\rfloor $-times continuously differentiable with bounded derivatives and for $r=p-\lfloor p\rfloor$
		\begin{align}\label{XCVCV}
			\ \ \sup_{\substack{z_1,z_0\in\mathbb{R}^m\\
					z_1\neq z_0}}\frac{\Vert D^{\lfloor p\rfloor }_{z_1}V-D^{\lfloor p\rfloor }_{z_0}V\Vert}{\Vert z_1-z_0\Vert^r}<\infty.
		\end{align}
		\item We say $V_{0}:\mathbb{R}^{m}\rightarrow  \mathbb{R}^m$ is a locally Lipschitz continuous vector field with linear growth on $\mathbb{R}^m$, if there are constants $\kappa_1,\kappa_2\geq 0$ such that
		\begin{align}\label{bound-drift}
			\Vert V_0(z)\Vert\leq \kappa_1+\kappa_{2}\Vert z\Vert,
		\end{align}  
		for every $z \in \R^m$.
		\item  Assume that $X \colon \mathbb{R} \to \mathbb{R}^d$ is a locally $\gamma$-H\"older path with $\frac{1}{4}<\gamma\leq \frac{1}{2}$.
			Suppose there exist continuous functions
			\begin{align*}
				\mathbb{X}^{2} \colon \mathbb{R} \times \mathbb{R} \to (\mathbb{R}^{d})^{\otimes 2}, \quad
				\mathbb{X}^{3} \colon \mathbb{R} \times \mathbb{R} \to (\mathbb{R}^{d})^{\otimes 3},
			\end{align*}
			called the second and third \emph{L\'evy areas} of $X$, respectively, which satisfy the following properties:
			\begin{itemize}
				\item For every $s,u,t \in \mathbb{R}$,
				\begin{align*}
					\mathbb{X}_{s,t}^{2}
					&=
					\mathbb{X}_{s,u}^{2}
					+
					\mathbb{X}_{u,t}^{2}
					+
					\delta X_{s,u}
					\otimes
					\delta X_{u,t}, \\
					\mathbb{X}_{s,t}^{3}
					&=
					\mathbb{X}_{s,u}^{3}
					+
					\mathbb{X}_{u,t}^{3}
					+
					\mathbb{X}_{s,u}^{2}
					\otimes
					\delta X_{u,t}
					+
					\delta X_{s,u}
					\otimes
					\mathbb{X}_{u,t}^{2}.
				\end{align*}
				\item Furthermore, these functions satisfy the analytic conditions
				\[
				\| \mathbb{X}^{2} \|_{2\gamma , I}
				:=
				\sup_{\substack{s,t\in I\\ s\neq t}}
				\frac{
					\Vert\mathbb{X}^{2}_{s,t}\Vert
				}{
					|t-s|^{2\gamma}
				}
				< \infty,
				\quad
				\| \mathbb{X}^{3} \|_{3\gamma , I}
				:=
				\sup_{\substack{s,t\in I\\ s\neq t}}
				\frac{
					\Vert\mathbb{X}^{3}_{s,t}\Vert
				}{
					|t-s|^{3\gamma}
				}
				< \infty
				\]
				for every compact interval $I \subset \mathbb{R}$.
			\end{itemize}
			In this case, we define $\mathbf{X} = \big( X, \mathbb{X}^{2}, \mathbb{X}^{3} \big)$ as a \emph{$\gamma$-rough path}, where
			\[
			\mathbf{X} \colon \mathbb{R} \to T^{3}(\mathbb{R}^{d})
			:=
			\mathbb{R}
			\oplus
			\mathbb{R}^{d}
			\oplus
			(\mathbb{R}^{d})^{\otimes 2}
			\oplus
			(\mathbb{R}^{d})^{\otimes 3},
			\]
			is given by
			\[
			t \mapsto
			1
			\oplus
			X_{t}
			\oplus
			\mathbb{X}_{t}^{2}
			\oplus
			\mathbb{X}_{0,t}^{3}.
			\]
			Using the increment notation
			\[
			\mathbf{X}_{s,t}
			:=
			\mathbf{X}_{s}^{-1}
			\otimes
			\mathbf{X}_{t},
			\]
			we obtain
			\[
			\mathbf{X}_{s,t}
			=
			1
			\oplus
			\delta X_{s,t}
			\oplus
			\mathbb{X}_{s,t}^{2}
			\oplus
			\mathbb{X}_{s,t}^{3}.
			\]
			Finally, we define the rough path norm as
			\begin{align}
				\Vert\mathbf{X}\Vert_{\gamma,I}
				:=
				\max \Big\{
				\Vert X \Vert_{\gamma,I},
				\sqrt{ \Vert \mathbb{X}^{2} \Vert_{2\gamma,I} },
				\sqrt[3]{ \Vert \mathbb{X}^{3} \Vert_{3\gamma,I} }
				\Big\}.
			\end{align}
		\item    
		We call a \emph{$\gamma$-rough path} \(\mathbf{X} = \big(X, \mathbb{X}^2, \mathbb{X}^3\big)\) \emph{geometric} if it can be approximated by smooth paths and their iterated integrals; see \cite[Definition~9.15]{FV10} for more details.
		In particular, \(\mathbf{X}\) takes values in the free nilpotent group of order $3$. Thus, if
		\[
		X_t = \sum_{i=1}^d X_t^i e_i, \quad
		\mathbb{X}_{s,t}^2 = \sum_{i,j=1}^d \mathbb{X}_{s,t}^{i,j} \, e_i \otimes e_j, \quad
		\mathbb{X}_{s,t}^3 = \sum_{i,j,k=1}^d \mathbb{X}_{s,t}^{i,j,k} \, e_i \otimes e_j \otimes e_k,
		\]
		then the following algebraic constraints must hold for every \( (i,j,k) \in \{1,2,\ldots,d\}^3 \) and all \( s, t \in \mathbb{R} \):
		\begin{align*}
			\mathbb{X}^{i,j}_{s,t} + \mathbb{X}^{j,i}_{s,t}
			&= \delta X^{i}_{s,t} \, \delta X^{j}_{s,t}, \\
			\sum_{\sigma \in S_3} \mathbb{X}_{s,t}^{\sigma(i), \sigma(j), \sigma(k)} 
			&= \delta X^i_{s,t} \, \delta X^j_{s,t} \, \delta X^k_{s,t},
		\end{align*}
		where \( S_3 \) denotes the set of all permutations of the triple \( (i, j, k) \), and \( (e_i)_{1 \leq i \leq d} \) is the standard basis of \( \mathbb{R}^d \).
		\item By \( a \lesssim b \), we mean that there exists a constant \( C \), independent of \( \mathbf{X} \), such that \( a \leq Cb \).
	\end{itemize}
	\section{Rough differential equations}\label{sec:RDE}
	In this section, we consider the rough differential equation of the form 
	\begin{align}\label{SDE}
		\mathrm{d}Z_{t} = V(Z_t) \,\mathrm{d} \mathbf{X}_t + V_{0}(Z_t) \, \mathrm{d}t, \quad Z_{0} = z_0 \in \mathbb{R}^m.
	\end{align}
	We will study various aspects of this equation. In particular, we will derive several estimates that are essential for analyzing the long-term behavior of the solutions to this type of equation. We will adopt the following assumption until the end of this section.
	\begin{assumption}\label{ASSVVNN}
		\begin{itemize}
			\item 	We assume $\frac{1}{4}<\gamma \leqslant \frac{1}{2}$ and that $\mathbf{X} = (X, \mathbb{X}^2, \mathbb{X}^3)$ is a  $\gamma$-Hölder \emph{geometric rough path} where $X$ takes values in $\mathbb{R}^d$.
			\item  We assume $V_{0}:\mathbb{R}^{m}\rightarrow  \mathbb{R}^m$ is a $C^{1}$-vector field such that we can find a polynomial $P$ for which
			\begin{align}\label{BNM<<<}
				\forall z\in\mathbb{R}^m:\ \ \  \Vert V_{0}(z)\Vert\leq P(\Vert z\Vert).
			\end{align} 
			\item For $ \frac{1}{\gamma}+1<p\leq \lfloor \frac{1}{\gamma}\rfloor+2 $, $V\colon \mathbb{R}^m\rightarrow \mathcal{L}(\mathbb{R}^d,\mathbb{R}^m)$ is a $\text{Lip}^{p}-$vector field. 
			\item We that for every initial value $z_0\in\mathbb{R}^m$, the equation \eqref{SDE} admits a unique solution such that we can find a polynomial $R$ and a continuous function $\Phi$ for which
			\begin{align}\label{sup_norm}
				\forall T> 0:\ \ \Vert\phi_{\mathbf{X}}(z_0)\Vert_{C^\gamma,[0,T]} \leq\Phi(T) R(\Vert z_0\Vert,\Vert\mathbf{X}\Vert_{\gamma,[0,T]}).
			\end{align}
		\end{itemize}
	\end{assumption}
	We now state three examples in which \eqref{SDE} possesses unique solutions for every initial condition and in which the bound \eqref{sup_norm} holds.
	\begin{example}
		\begin{itemize}
			\item 
			The simplest case arises when \( V \) and \( V_0 \) are \(\text{Lip}^p\)-vector fields. It is well-known that, for every \( z_0 \in \mathbb{R}^m \), the equation \eqref{SDE} admits a unique solution \cite[Theorem 10.26]{FV10}, which is also differentiable with respect to the initial condition \cite[Theorem 11.6]{FV10}. Moreover, Assumption \eqref{ASSVVNN} is satisfied in this setting.
			\item Assume that \( V_0 \) is a locally Lipschitz continuous vector field with linear growth on \( \mathbb{R}^m \). Then, by \cite[Theorem 3.1]{RS17}, the a priori bound \eqref{sup_norm} holds. Consequently, Assumption \eqref{ASSVVNN} is satisfied.
			\item If \( V_0 \) satisfies the one-sided conditions formulated in \cite[Equations (4.2) and (4.3)]{RS17}, it is shown in \cite[Theorem 4.3]{RS17} that the a priori bound \eqref{sup_norm} still holds. \footnote{Strictly speaking, the bounds in \cite[Theorem 3.1]{RS17} and \cite[Theorem 4.3]{RS17} are formulated for the $p$-variation and not for the H\"older-norm. However, an inspection of the proof reveals that the same strategy applied there works also for the H\"older norm provided the equations are driven by H\"older rough paths.} 
		\end{itemize}
	\end{example}
	\subsection{ Basic objects, definitions and auxiliary Lemmas}
	In this section, we introduce key concepts from rough path theory and define integrals within this framework. Furthermore, we establish several auxiliary lemmas that will be instrumental in the subsequent section.
	\begin{remark}
		If we assume \( \frac{1}{3} < \gamma \leq \frac{1}{2} \), the calculations simplify significantly, and the third \emph{Lévy area} becomes superfluous. In this paper, however, we address the more challenging case, namely \( \frac{1}{4} < \gamma \leq \frac{1}{3} \). All statements remain valid for \( \frac{1}{3} < \gamma \leq \frac{1}{2} \) under Assumption~\ref{ASSVVNN}.
	\end{remark}

	
	To facilitate the forthcoming calculations, it is helpful to recall the definition of a path \emph{controlled by \( \mathbf{X} \)} as introduced in \cite[Section 4.5]{FH20}.
	\begin{definition}\label{DDCD}
		Let \(\gamma_1 \leq \gamma\) and \(Y: [a,b] \rightarrow W\) be a \(\gamma_1\)-Hölder path taking values in some finite-dimensional Banach space \(W\). If there are \(\gamma_1\)-Hölder paths \(Y^{(1)}\) and \(Y^{(2)}\) taking values in \(\mathcal{L}(\mathbb{R}^d,W)\) and \(\mathcal{L}(\mathbb{R}^d,\mathcal{L}(\mathbb{R}^d,W)) \approx \mathcal{L}(\mathbb{R}^d \otimes \mathbb{R}^d,W)\) that satisfy
		\begin{align}\label{expansion}
			\begin{split}
				s, t \in [a,b], \; s \leq t\colon \quad \delta Y_{s,t} - Y^{(1)}_s \delta X_{s,t} - Y^{(2)}_s \mathbb{X}^{2}_{s,t} &= Y^{\#}_{s,t} = \mathcal{O}(|t-s|^{3\gamma}) \quad \text{and} \\
				\delta Y^{(1)}_{s,t} - Y^{(2)}_s \delta X_{s,t} &= (Y^{(1)})^{\#}_{s,t} = \mathcal{O}(|t-s|^{2\gamma}),
			\end{split}
		\end{align}
		the triple \((Y,Y^{(1)},Y^{(2)})\) is said to be a \emph{path controlled by \(\mathbf{X}\)}. We denote \(Y^{(1)}\) and \(Y^{(2)}\) as the first and second Gubinelli derivatives, respectively, and \(Y^{\#}\) and \((Y^{(1)})^{\#}\) as the remainder terms. We use \(\mathcal{D}^{\gamma_1}_{\mathbf{X},W}([a,b])\) to denote the space of controlled paths. We also impose the following norm:
		\begin{align}\label{NM(87654)}
			\begin{split}
				&\vertiii{(Y,Y^{(1)},Y^{(2)})}_{\mathcal{D}^{\gamma_1}_{\mathbf{X},W}([a,b])} 
				\\&:= \max \bigg\lbrace \| Y \|_{\infty,[a,b]}, \| Y^{(1)} \|_{\infty,[a,b]}, \| Y^{(2)} \|_{\infty,[a,b]}, \| Y^{(2)} \|_{\gamma_1,[a,b]}, \| (Y^{(1)})^{\#} \|_{2\gamma_1,[a,b]}, \| Y^{\#} \|_{3\gamma_1,[a,b]} \bigg\rbrace.
			\end{split}
		\end{align}
		It is obvious that if $(Z,Z^{(1)},Z^{(2)})$ be another path controlled by $\mathbf{X}$, then 
		\begin{align*}
			&\vertiii{(Y+Z,Y^{(1)}+Z^{(1)},Y^{(2)}+Z^{(2)})}_{\mathcal{D}^{\gamma_1}_{\mathbf{X},W}([a,b])}\\&\qquad\leq \vertiii{(Y,Y^{(1)},Y^{(2)})}_{\mathcal{D}^{\gamma_1}_{\mathbf{X},W}([a,b])}+\vertiii{(Z,Z^{(1)},Z^{(2)})}_{\mathcal{D}^{\gamma_1}_{\mathbf{X},W}([a,b])}.
		\end{align*}
	\end{definition}
	\begin{remark}\label{VCYT^&*}
		Following estimates are  direct consequences of \eqref{expansion} and \eqref{NM(87654)}
		\begin{align*}
			&\Vert Y\Vert_{\gamma_1,[a,b]}\lesssim (1+\Vert\mathbf{X}\Vert_{\gamma,[a,b]}^2)\Vert Y\Vert_{\mathcal{D}^{\gamma_1}_{\mathbf{X},W}([a,b])},\\
			&\Vert Y^{1}\Vert_{\gamma_1,[a,b]}\lesssim (1+\Vert\mathbf{X}\Vert_{\gamma,[a,b]})\Vert Y\Vert_{\mathcal{D}^{\gamma_1}_{\mathbf{X},W}([a,b])}.
		\end{align*}
	\end{remark} 
	\begin{remark}
		To simplify notation, we will occasionally omit $W$ and use $\mathcal{D}^{\gamma_1}_{\mathbf{X}}([a,b])$ in place of $\mathcal{D}^{\gamma_1}_{\mathbf{X},W}([a,b])$.
	\end{remark}
	\begin{remark}
		From \eqref{expansion}, 
		\begin{align*}
			s,u,t \in [a,b], \; s \leq u\leq t\colon\ \  &Y^{\#}_{s,t}=Y^{\#}_{s,u}+Y^{\#}_{u,t}+(Y^{(1)})^{\#}_{s,u}\delta X_{u,t}+\delta Y^{(2)}_{s,u}\mathbb{X}^{2}_{u,t},\\
			& (Y^{(1)})^{\#}_{s,t}=(Y^{(1)})^{\#}_{s,u}+(Y^{(1)})^{\#}_{u,t}+\delta Y^{(2)}_{s,u}\delta X_{u,t}.
		\end{align*}
		This yields the following inequality
		\begin{align}\label{DDFFD}
			\begin{split}
				&\vertiii{(Y,Y^{(1)},Y^{(2)})}_{\mathcal{D}^{\gamma_1}_{\mathbf{X}}([a,c])}\leq  (\Vert\mathbf{X}\Vert_{\gamma,[a,c]}+1)^2\vertiii{(Y,Y^{(1)},Y^{(2)})}_{\mathcal{D}^{\gamma_1}_{\mathbf{X}}([a,b])}\\&\quad+\vertiii{(Y,Y^{(1)},Y^{(2)})}_{\mathcal{D}^{\gamma_1}_{\mathbf{X}}([b,c])},\\&\quad
			\end{split}
		\end{align}
		where $a<b<c$ and $c-a\leq 1$.
	\end{remark}
	We establish the following lemma for the composition of two controlled paths.	\begin{lemma}\label{composit}
		Let $U$ and $W$ be two finite-dimensional Banach spaces, and assume $\gamma_1 \leq \gamma$. Let $A$ be a $\gamma_1$-Hölder path taking values in $U$, with $(A, A^{(1)}, A^{(2)})$ controlled by $\mathbf{X}$. Additionally, assume that $B$ is a $\gamma_1$-Hölder path taking values in $\mathcal{L}(U, W)$, and that $(B, B^{(1)}, B^{(2)})$ is also controlled by $\mathbf{X}$. Define the symmetric operator:
		\begin{align*}
			&\text{Sym} : \mathbb{R}^d \otimes \mathbb{R}^d \rightarrow \mathbb{R}^d \otimes \mathbb{R}^d, \\
			&\text{Sym}(v_1 \otimes v_2) = \frac{v_1 \otimes v_2 + v_2 \otimes v_1}{2}.
		\end{align*}
		Set
		\begin{align*}
			(A B)^{(1)}_{s} &= A^{(1)}_{s} B_{s} + A_{s} B^{(1)}_{s}, \\
			(A B)^{(2)}_{s} &= A^{(2)}_{s} B_{s} + A_{s} B^{(2)}_{s} + 2(A^{(1)}_{s} B^{(1)}_{s}) \circ \text{Sym},
		\end{align*}
		with the following actions:
		\begin{align*}
			&v \in \mathbb{R}^d: \quad (A B)^{(1)}_{s}(v) = A^{(1)}_{s}(v) B_{s} + A_{s} B^{(1)}_{s}(v), \\
			&v_1 \otimes v_2 \in \mathbb{R}^d \otimes \mathbb{R}^d : \\
			&\quad (A^{(1)}_s B^{(1)}_{s})(v_1 \otimes v_2) = A^{(1)}_{s}(v_1) B^{(1)}_{s}(v_2), \\
			&\quad (A B)^{(2)}_{s}(v_1 \otimes v_2) := A^{(2)}_{s}(v_1 \otimes v_2) B_{s} + A_{s} B^{(2)}_{s}(v_1 \otimes v_2) \\
			&\qquad\qquad\quad + A^{(1)}_{s}(v_1) B^{(1)}_{s}(v_2) + A^{(1)}_{s}(v_2) B^{(1)}_{s}(v_1).
		\end{align*}
		Also,
		\begin{align*}
			(A B)^{\#}_{s,t} &= A^{\#}_{s,t} B_{s} + A_{s} B^{\#}_{s,t} 
			+ (A^{(1)}_{s} (\delta X_{s,t}))(B^{(2)}_{s} \mathbb{X}_{s,t} + B^{\#}_{s,t}) 
			+ (A^{(2)}_{s} \mathbb{X}_{s,t} + A^{\#}_{s,t})(\delta B_{s,t}),
			\\
			((A B)^{(1)})^{\#}_{s,t} &= (A^{(1)})^{\#}_{s,t} B_{s} + A_{s} (B^{(1)})^{\#}_{s,t} 
			+ A_{s}^{(1)} (B^{(1)})^{\#}_{s,t} + (A^{(1)})^{\#}_{s,t} B_{s}^{(1)}
			\\
			&\quad + (\delta A^{(1)}_{s,t})(\delta B_{s,t}) + (\delta A_{s,t})(\delta B^{(1)}_{s,t}).
		\end{align*}
		Then $(AB, (A B)^{(1)}, (A B)^{(2)})$ is a path controlled by $\mathbf{X}$. In addition, $(A B)^{\#}$ and $((A B)^{(1)})^{\#}$ are the remainder terms as in \eqref{expansion}. Furthermore, 
		\begin{align}\label{CDCa}
			\vertiii{(AB,(A B)^{(1)},(A B)^{(2)})}_{\mathcal{D}^{\gamma_1}_{\mathbf{X}}([a,b])}\lesssim (1+\Vert\mathbf{X}\Vert_{\gamma,[a,b]}^4)\vertiii{A}_{\mathcal{D}^{\gamma_1}_{\mathbf{X}}([a,b])}\vertiii{B}_{\mathcal{D}^{\gamma_1}_{\mathbf{X}}([a,b])}.
		\end{align}
	\end{lemma}	
	\begin{proof}
		Note that
		\begin{align}\label{MKLa}
			\delta(AB)_{s,t}=A_{s}(\delta B_{s,t})+(\delta A_{s,t})B_s+(\delta A_{s,t})(\delta B_{s,t}).
		\end{align} 
		For the remainder of the proof, it is sufficient to substitute the expansions of \( A \) and \( B \) into \eqref{MKLa}. Notably, since \( \mathbf{X} \) is geometric, we have
		\begin{align*}
			2(A^{(1)}_{s} B^{(1)}_{s})\circ{Sym}(\mathbb{X}_{s,t})=(A^{(1)}_sB^{(1)}_s)\delta X_{s,t}\otimes\delta X_{s,t}.
		\end{align*}
		Also, \eqref{CDCa} follows from Remark \ref{VCYT^&*} and the expressions that we provide in the statement of Lemma. 
	\end{proof}
	\begin{remark}
		If the meaning of the first and second Gubinelli derivatives is clear from the context, we may adopt the convention of \( \vertiii{Y}_{\mathcal{D}^{\gamma}_{\mathbf{X},W}([a,b])} \) instead of \( \vertiii{(Y,Y^{(1)},Y^{(2)})}_{\mathcal{D}^{\gamma_1}_{\mathbf{X},W}([a,b])} \).
	\end{remark}
	It is well-known that the composition of a controlled path by \( \mathbf{X} \) with a smooth function results in a path that is again controlled by \( \mathbf{X} \). In the following lemma, we make this statement precise and perform explicit calculations for our future purposes.
	\begin{lemma}\label{expansion_2}
		Let $W$ and $U$ be finite-dimensional Banach spaces, $\gamma_1\leq \gamma$ and $G \colon W \to U$ be $3$-times Fr\'echet differentiable. Assume  $(Y,Y^{(1)},Y^{(2)})\in \mathcal{D}^{\gamma_1}_{\mathbf{X}}([a,b])$. Then $(G(Y),G(Y)^{(1)},G(Y)^{(2)})\in \mathcal{D}^{\gamma_1}_{\mathbf{X}}([a,b])$, where
		\begin{align}\label{T1}
			\begin{split}
				&	G(Y)^{(1)} = D_YG(Y^{(1)}),\\
				&v\in\mathbb{R}^d: \ \	G(Y)^{(1)} v=D_YG(Y^{(1)} v)
			\end{split}
		\end{align}
		and
		\begin{align}\label{T2}
			\begin{split}
				&	G(Y)^{(2)} = D_{Y}^2G(Y^{(1)},Y^{(1)}) + D_YG (Y^{(2)}),	\\
				&v_1\otimes v_2\in\mathbb{R}^d\otimes\mathbb{R}^d: \ \ G(Y)^{(2)}(v_1\otimes v_2)= D_{Y}^2G(Y^{(1)}v_1,Y^{(1)}v_2)+	D_YG (Y^{(2)}(v_1\otimes v_2)).
			\end{split}
		\end{align}
		Furthermore,
		\begin{align}\label{T3}
			\begin{split}
				(G(Y))^{\#}_{s,t}&=D_{Y_s}G(Y^{\#}_{s,t})+ \frac{1}{2}D_{Y_s}^2G\big(Y^{(1)}_{s}\delta X_{s,t},Y^{(2)}_s\mathbb{X}_{s,t}^2 +Y^{\#}_{s,t}\big)+\frac{1}{2}D^{2}_{Y_s}G\big(Y^{(2)}_s\mathbb{X}_{s,t}^2 +Y^{\#}_{s,t},\delta Y_{s,t}\big)\\&+ \int_{0}^{1}\frac{(1-\sigma)^2}{2}D^{3}_{\sigma Y_t+(1-\sigma)Y_s}G\big(\delta Y_{s,t},\delta Y_{s,t},\delta Y_{s,t}\big) \, \mathrm{d}\sigma,
				\\ \big(G(Y)^{(1)}\big)^{\#}_{s,t}&= D_{Y_s}^{2} G\big(Y^{(2)}_s\mathbb{X}^{2}_{s,t}+Y^{\#}_{s,t},Y^{(1)}_{s}\big)+D_{Y_s}G\big((Y^{(1)})^{\#}_{s,t}\big)\\+&\int_{0}^{1}(1-\sigma)D^{3}_{\sigma Y_t+(1-\sigma)Y_s}G\big(\delta Y_{s,t},\delta Y_{s,t},Y^{(1)}_{s}\big)\mathrm{d}\sigma+(D_{Y_t}G-D_{Y_s}G)(\delta Y_{s,t}),
			\end{split}
		\end{align}
		with the following action
		\begin{align*}
			v\in\mathbb{R}^d: \  \ &\big(G(Y)^{(1)}\big)^{\#}_{s,t}v= D_{Y_s}^{2} G\big(Y^{(2)}_s\mathbb{X}^{2}_{s,t}+Y^{\#}_{s,t},Y^{(1)}_{s}v\big)+D_{Y_s}G\big((Y^{(1)})^{\#}_{s,t}v\big)\\+&\int_{0}^{1}(1-\sigma)D^{3}_{\sigma Y_t+(1-\sigma)Y_s}G\big(\delta Y_{s,t},\delta Y_{s,t},Y^{(1)}_{s}v\big)\mathrm{d}\sigma+(D_{Y_t}G-D_{Y_s}G)(\delta Y_{s,t})v.
		\end{align*}
		If $G\in C_{b}^3(W,U)$, then
		\begin{align}\label{composite}
			\begin{split}
				u,v\in [a,b], \;  u\leq v\colon \  &\vertiii{\big(G(Y),G(Y)^{(1)},G(Y)^{(2)}\big)}_{\mathcal{D}^{\gamma_1}_{\mathbf{X}}([u,v])}\\&\lesssim\big(1+ \vertiii{Y}_{\mathcal{D}^{\gamma_1}_{\mathbf{X}}([u,v])}^9+\Vert\mathbf{X}\Vert_{\gamma,[u,v]}^9\big).
			\end{split}
		\end{align}
	\end{lemma}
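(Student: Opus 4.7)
My plan is to apply the third-order Taylor expansion of $G$ to $\delta G(Y)_{s,t}$, substitute the controlled-path expansion of $\delta Y_{s,t}$, and then use the geometric property of $\mathbf{X}$ to recognize the candidate Gubinelli derivatives \eqref{T1} and \eqref{T2}; the remaining terms will form exactly the expressions in \eqref{T3}.

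\textbf{Step 1 (Taylor expansions).} By Taylor's theorem with integral remainder,
$$\delta G(Y)_{s,t} = D_{Y_s}G(\delta Y_{s,t}) + \tfrac{1}{2}D^2_{Y_s}G(\delta Y_{s,t},\delta Y_{s,t}) + \int_0^1 \tfrac{(1-\sigma)^2}{2}\, D^3_{\sigma Y_t + (1-\sigma)Y_s}G(\delta Y_{s,t},\delta Y_{s,t},\delta Y_{s,t})\, \mathrm{d}\sigma,$$
and a second-order version applied to $z \mapsto D_z G$ will be used for $(G(Y)^{(1)})^{\#}$.

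\textbf{Step 2 (Geometricity turns the quadratic term into $\mathbb{X}^2$).} Substituting $\delta Y_{s,t} = Y^{(1)}_s\delta X_{s,t} + Y^{(2)}_s\mathbb{X}^2_{s,t} + Y^{\#}_{s,t}$ into the first-order term supplies $D_{Y_s}G(Y^{(1)}_s\delta X_{s,t}) = G(Y)^{(1)}_s\delta X_{s,t}$ together with $D_{Y_s}G(Y^{(2)}_s\mathbb{X}^2_{s,t})$, which accounts for the $D_YG(Y^{(2)})$ piece of $G(Y)^{(2)}_s\mathbb{X}^2_{s,t}$. The leading piece of the quadratic term is $\tfrac{1}{2}D^2_{Y_s}G(Y^{(1)}_s\delta X_{s,t},Y^{(1)}_s\delta X_{s,t})$; using that $\mathbf{X}$ is geometric, so that $\mathrm{Sym}(\mathbb{X}^2_{s,t}) = \tfrac12 \delta X_{s,t}\otimes \delta X_{s,t}$, and that $D^2G$ is symmetric in its two arguments, this equals $D^2_{Y_s}G(Y^{(1)}_s,Y^{(1)}_s)(\mathbb{X}^2_{s,t})$, which is exactly the remaining piece of $G(Y)^{(2)}_s\mathbb{X}^2_{s,t}$. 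The cross terms $Y^{(1)}\delta X \otimes (Y^{(2)}\mathbb{X}^2 + Y^{\#})$ and $(Y^{(2)}\mathbb{X}^2 + Y^{\#}) \otimes \delta Y$ together with $D_{Y_s}G(Y^{\#}_{s,t})$ and the cubic integral are manifestly of order $|t-s|^{3\gamma_1}$ and collect into the stated formula for $G(Y)^{\#}_{s,t}$.

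\textbf{Step 3 (Remainder of the first Gubinelli derivative).} Write $\delta(D_YG(Y^{(1)}))_{s,t} = (D_{Y_t}G - D_{Y_s}G)(Y^{(1)}_s) + D_{Y_t}G(\delta Y^{(1)}_{s,t})$. A second-order Taylor expansion of $D G$ at $Y_s$ yields $(D_{Y_t}G - D_{Y_s}G)(Y^{(1)}_s) = D^2_{Y_s}G(\delta Y_{s,t}, Y^{(1)}_s) + \int_0^1(1-\sigma)D^3_{\sigma Y_t + (1-\sigma)Y_s}G(\delta Y_{s,t},\delta Y_{s,t},Y^{(1)}_s)\,\mathrm{d}\sigma$; combining this with $\delta Y^{(1)}_{s,t} = Y^{(2)}_s\delta X_{s,t} + (Y^{(1)})^{\#}_{s,t}$ and extracting the $\delta X_{s,t}$-proportional piece $D^2_{Y_s}G(Y^{(1)}_s\delta X_{s,t},Y^{(1)}_s) + D_{Y_s}G(Y^{(2)}_s\delta X_{s,t})$ — which is exactly $G(Y)^{(2)}_s \delta X_{s,t}$ — leaves the formula for $(G(Y)^{(1)})^{\#}_{s,t}$ stated in \eqref{T3} (after shifting base-point $Y_t \leftrightarrow Y_s$ in the first-order piece, the discrepancy being $(D_{Y_t}G - D_{Y_s}G)(\delta Y_{s,t})$).

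\textbf{Step 4 (Norm estimate).} Assuming $G \in C^3_b(W,U)$, every $\|D^kG\|_\infty$ is a constant. Each of the six quantities inside the norm \eqref{NM(87654)} is now estimated from the explicit formulas \eqref{T1}--\eqref{T3}. Sup-norms of $G(Y), G(Y)^{(1)}, G(Y)^{(2)}$ produce at most two factors of $\vertiii{Y}$; the seminorms $\|G(Y)^{(2)}\|_{\gamma_1}$, $\|(G(Y)^{(1)})^{\#}\|_{2\gamma_1}$, $\|G(Y)^{\#}\|_{3\gamma_1}$ are bounded by plugging in each explicit term and estimating $\|Y\|_{\gamma_1} \lesssim (1+\|\mathbf{X}\|_\gamma^2)\vertiii{Y}$, $\|Y^{(1)}\|_{\gamma_1} \lesssim (1+\|\mathbf{X}\|_\gamma)\vertiii{Y}$ from Remark \ref{VCYT^&*}, together with $\|Y^{\#}\|_{3\gamma_1},\|(Y^{(1)})^{\#}\|_{2\gamma_1} \leq \vertiii{Y}$ and $\|\mathbb{X}^2\|_{2\gamma},\|\mathbb{X}^3\|_{3\gamma} \leq \|\mathbf{X}\|_\gamma^2, \|\mathbf{X}\|_\gamma^3$. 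Each product collects at most three factors of $\vertiii{Y}$ and at most three factors involving $\|\mathbf{X}\|_\gamma$, so applying Young's inequality $ab \leq \tfrac{1}{2}(a^p + b^q)$ to absorb the mixed powers gives the clean form $1 + \vertiii{Y}^9 + \|\mathbf{X}\|_{\gamma,[u,v]}^9$.

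The only conceptually nontrivial step is Step 2 — it is precisely the geometricity of $\mathbf{X}$ together with the symmetry of $D^2G$ that forces the ostensibly-$\delta X \otimes \delta X$ contribution to reassemble as $\mathbb{X}^2$, producing the right structural expression for $G(Y)^{(2)}$. Everything else is careful but mechanical bookkeeping of leftover remainder terms and their polynomial bounds.
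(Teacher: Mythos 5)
Your proposal follows the paper's approach exactly: Taylor-expand $G$ to third order with integral remainder, substitute the controlled-path expansion of $\delta Y_{s,t}$, use the shuffle identity $\mathbb{X}^{i,j}_{s,t}+\mathbb{X}^{j,i}_{s,t}=\delta X^i_{s,t}\delta X^j_{s,t}$ together with the symmetry of $D^2G$ to recombine the quadratic $\delta X\otimes\delta X$ piece into $\mathbb{X}^2$, and then read off the Gubinelli derivatives and remainders; the norm estimate then comes from Remark~\ref{VCYT^&*} and the elementary bound $x^iy^j\lesssim 1+x^9+y^9$ for $i+j\le 9$, which is what the paper uses and what your power-counting (the cubic remainder gives degree $3$ in $\vertiii{Y}$ and, through $\Vert Y\Vert_{\gamma_1}\lesssim(1+\Vert\mathbf{X}\Vert_\gamma^2)\vertiii{Y}$, degree $6$ in $\Vert\mathbf{X}\Vert_\gamma$) ultimately delivers. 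One small point worth flagging: the final term in your $\big(G(Y)^{(1)}\big)^{\#}_{s,t}$ should read $(D_{Y_t}G-D_{Y_s}G)\big(\delta Y^{(1)}_{s,t}\big)$ rather than $(D_{Y_t}G-D_{Y_s}G)(\delta Y_{s,t})$ — the latter does not even have the right codomain, since $(G(Y)^{(1)})^{\#}_{s,t}$ must land in $\mathcal{L}(\mathbb{R}^d,U)$; this matches the obvious typo in \eqref{T3} of the statement, so you appear to have reproduced it faithfully rather than spotted it.
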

	\begin{proof}
		The expressions for the derivatives and remainder terms follow from the Taylor expansions given below, together with the fact that \(\mathbf{X}\) is geometric:
		\begin{align}
			\label{DecoU}
			\begin{split}
				&\forall x, y \in W: \\
				G(y) - G(x) &= D_x G(y - x) + \int_0^1 (1 - \sigma) D^2_{\sigma y + (1 - \sigma)x} G(y - x, y - x) \, \d \sigma, \\
				G(y) - G(x) &= D_x G(y - x) + \frac{1}{2} D^2_x G(y - x, y - x) \\
				&\quad + \frac{1}{2} \int_0^1 \frac{(1 - \sigma)^2}{2} D^3_{\sigma y + (1 - \sigma)x} G(y - x, y - x, y - x) \, \d \sigma.
			\end{split}
		\end{align}
		Additionally, when \( G \in C_{b}^3(W, U) \), the inequality \eqref{composite} follows directly from Remark \ref{VCYT^&*}, \eqref{T1}, \eqref{T2}, \eqref{T3}, and the following simple inequality:
		\begin{align}
			x, y \geq 0: \quad \max_{\substack{i,j \geq 0 \\ i + j \leq 9}} \{ x^i y^j \} \lesssim 1 + x^9 + y^9.
		\end{align}
	\end{proof}
	From the latter lemma, we obtain the following result:
	\begin{corollary}
		Assume $0 < r \leq 1$ and that $G$ is a $\text{Lip}^{3+r}$ vector field. Let $(Y, Y^{(1)}, Y^{(2)})$ and $(Z, Z^{(1)}, Z^{(2)})$ belong to $\mathcal{D}^{\gamma}_{\mathbf{X}}([a, b])$. Then, for a fixed $0 < \kappa < 1$,
		\begin{align}\label{bNNNMM}
			\begin{split}
				u,v\in [a,b], \; u\leq v\colon\ \ &\vertiii{G(Y)-G(Z)}_{\mathcal{D}^{\kappa\gamma}_{\mathbf{X}}([u,v])}\\&\lesssim\max\left\lbrace \vertiii{Y-Z}_{\mathcal{D}^{\gamma}_{\mathbf{X}}([u,v])}^r,\vertiii{Y-Z}_{\mathcal{D}^{\gamma}_{\mathbf{X}}([u,v])},\vertiii{Y-Z}_{\mathcal{D}^{\gamma}_{\mathbf{X}}([u,v])}^{1-\kappa} \right\rbrace\\&\quad\ \times\left(1+ \vertiii{Y}_{\mathcal{D}^{\gamma}_{\mathbf{X}}([u,v])}^9+\vertiii{Z}_{\mathcal{D}^{\gamma}_{\mathbf{X}}([u,v])}^9+\Vert\mathbf{X}\Vert_{\gamma,[u,v]}^9\right),
			\end{split}
		\end{align}	
	\end{corollary}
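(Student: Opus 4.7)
\emph{Proof plan.} The strategy is to apply Lemma \ref{expansion_2} separately to $Y$ and to $Z$, subtract the resulting expressions for the Gubinelli derivatives and remainders, and then bound each term using the $\text{Lip}^{3+r}$-regularity of $G$. Since $(G(Y)-G(Z))^{(k)} = G(Y)^{(k)} - G(Z)^{(k)}$ for $k = 1, 2$, and similarly for the two remainder terms, we obtain explicit expressions for every object that appears in $\vertiii{G(Y)-G(Z)}_{\mathcal{D}^{\kappa\gamma}_{\mathbf{X}}([u,v])}$. Every difference of the form $D^j_{Y_s} G(A) - D^j_{Z_s} G(B)$ is split as
\[
D^j_{Y_s} G(A) - D^j_{Z_s} G(B) = \bigl(D^j_{Y_s}G - D^j_{Z_s}G\bigr)(A) + D^j_{Z_s}G(A-B),
\]
with the first piece estimated, for $j \le 2$, by the fundamental theorem of calculus applied to $D^{j+1}G$ (yielding a factor $\|Y_s - Z_s\|$), and, for $j = 3$, by the Hölder hypothesis $\|D^3_x G - D^3_y G\| \lesssim \|x - y\|^r$ coming from the $\text{Lip}^{3+r}$ assumption (yielding a factor $\|Y_s - Z_s\|^r$). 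Consequently every term produced by the expansions in \eqref{T1}, \eqref{T2}, \eqref{T3} factors as a $\vertiii{Y-Z}$- or $\vertiii{Y-Z}^r$-multiple of a product of norms of $Y$, $Z$ and $\mathbf{X}$.

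Using Remark \ref{VCYT^&*} together with the uniform bounds on $D^jG$, $j \le 3$, the product part is dominated by $1 + \vertiii{Y}^9 + \vertiii{Z}^9 + \Vert\mathbf{X}\Vert_{\gamma,[u,v]}^9$ along exactly the same lines as the proof of \eqref{composite}. This immediately controls $\Vert G(Y) - G(Z)\Vert_\infty$, $\Vert G(Y)^{(1)} - G(Z)^{(1)}\Vert_\infty$, $\Vert G(Y)^{(2)} - G(Z)^{(2)}\Vert_\infty$ and the Hölder seminorm $\Vert G(Y)^{(2)} - G(Z)^{(2)}\Vert_{\kappa\gamma}$, each with a prefactor that is either $\vertiii{Y-Z}$ or $\vertiii{Y-Z}^r$.

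The non-routine point is the $2\kappa\gamma$- and $3\kappa\gamma$-Hölder seminorms of the remainders $(G(Y)^{(1)} - G(Z)^{(1)})^{\#}$ and $(G(Y) - G(Z))^{\#}$. Here the above strategy delivers two competing bounds: on the one hand, the individual controlled-path structure of $G(Y)$ and $G(Z)$ (via \eqref{composite}) provides an $|t-s|^{3\gamma}$ (resp. $|t-s|^{2\gamma}$) bound with constant of order $1 + \vertiii{Y}^9 + \vertiii{Z}^9 + \Vert\mathbf{X}\Vert_{\gamma,[u,v]}^9$; on the other hand, telescoping between $Y$ and $Z$ in every position of \eqref{T3} gives a uniform-in-$(s,t)$ bound proportional to $\vertiii{Y-Z}$. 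Interpolation of the form
\[
\Vert R_{s,t}\Vert \le \bigl(A|t-s|^\alpha\bigr)^{\kappa} B^{1-\kappa} = A^\kappa B^{1-\kappa}|t-s|^{\kappa\alpha}
\]
then produces the desired $\kappa\gamma$-Hölder bound with an extra factor $\vertiii{Y-Z}^{1-\kappa}$. The three sources of $\vertiii{Y-Z}$-powers, namely $r$ (from the $D^3G$-difference in the triple-derivative integral in \eqref{T3}), $1$ (from telescoping lower-order derivatives), and $1-\kappa$ (from interpolation), combine by taking their maximum and yield the right-hand side of \eqref{bNNNMM}. The main obstacle is precisely the bookkeeping of which of the three estimates applies to each of the numerous summands coming from \eqref{T3} and its first-derivative analogue.
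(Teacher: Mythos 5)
Your proposal correctly identifies the three building blocks (telescoping, the $r$-Hölder modulus of $D^3G$, and an interpolation step), but it misplaces the interpolation, and the step you label as ``immediate'' is in fact the one that fails and is the reason the paper needs interpolation at all.

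You claim that the $\kappa\gamma$-Hölder seminorm $\Vert G(Y)^{(2)}-G(Z)^{(2)}\Vert_{\kappa\gamma}$ is ``immediately'' controlled by telescoping with a prefactor $\vertiii{Y-Z}$ or $\vertiii{Y-Z}^r$. That is not so. The quantity $\delta\big(G(Y)^{(2)}-G(Z)^{(2)}\big)_{s,t}$ is a \emph{mixed second difference}: you subtract in time ($t$ vs.\ $s$) and in the argument ($Y$ vs.\ $Z$). To extract both a factor $|t-s|^{\kappa\gamma}$ and a factor $\vertiii{Y-Z}$ you would need a modulus of continuity for the map $a\mapsto D^2_a G$ at the level of its \emph{derivative}, i.e.\ effectively $D^4G$. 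Under $\text{Lip}^{3+r}$ the best telescoping can give is $|t-s|^{r\gamma}\vertiii{Y-Z}$ (write $D^2_{Y_\cdot}G-D^2_{Z_\cdot}G$ as an integral of $D^3G$, then take increments and use $r$-Hölder continuity of $D^3G$). When $\kappa>r$ this exponent is strictly too weak, so no direct telescoping argument yields the $\kappa\gamma$-Hölder seminorm with prefactor $\vertiii{Y-Z}$ or $\vertiii{Y-Z}^r$. This is exactly where the paper interpolates: between the sup bound (linear in $\vertiii{Y-Z}$, that is \eqref{V1}) and the $\gamma$-Hölder bound on each of $G(Y)^{(2)}$, $G(Z)^{(2)}$ separately (no $\vertiii{Y-Z}$ factor, Remark \ref{VCYT^&*}), producing $|t-s|^{\kappa\gamma}\vertiii{Y-Z}^{1-\kappa}$, which is where the $\vertiii{Y-Z}^{1-\kappa}$ term in \eqref{bNNNMM} actually comes from.

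Conversely, your interpolation for the remainder seminorms is unnecessary and actually degrades the exponent. Every summand in the expansion \eqref{T3} already carries a full $|t-s|^{3\gamma}$ (resp.\ $|t-s|^{2\gamma}$) decay because each factor ($Y^{\#}_{s,t}$, $\delta Y_{s,t}$, $\delta X_{s,t}$, $\mathbb{X}^2_{s,t}$) is itself small in $|t-s|$; telescoping between $Y$ and $Z$ keeps this decay and produces the prefactor $\max\{\vertiii{Y-Z}^r,\vertiii{Y-Z}\}$ directly, so no interpolation is required to pass to $3\kappa\gamma$ or $2\kappa\gamma$. If you instead interpolate the remainder against the bound \eqref{composite}, the worst-case term yields a prefactor $\vertiii{Y-Z}^{r(1-\kappa)}$, an exponent that does not appear inside the maximum in \eqref{bNNNMM}; your route therefore cannot reproduce the stated estimate as written.

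In short: swap the roles. Interpolation belongs on the $\kappa\gamma$-Hölder seminorm of the second Gubinelli derivative, producing the $\vertiii{Y-Z}^{1-\kappa}$ contribution; the remainder seminorms are handled by direct telescoping of \eqref{T3}, producing the $\max\{\vertiii{Y-Z}^r,\vertiii{Y-Z}\}$ contribution; the sup norms give the $\vertiii{Y-Z}$ contribution. This is the structure of the paper's argument and the reason the right-hand side of \eqref{bNNNMM} involves exactly those three exponents.
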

	\begin{proof}
		First, note that from Lemma \ref{expansion_2},
		\begin{align*}
			&(G(Y)^{(2)})_t-  (G(Z)^{(2)})_t=D_{Y_t}^2G(Y^{(1)}_t,Y^{(1)}_t) + D_{Y_{t}}G (Y^{(2)}_t)-D_{Z_t}^2G(Z^{(1)}_t,Z^{(1)}_t) - D_{Z_{t}}G (Z^{(2)}_t)
			\\& =\int_{0}^{1}D^{3}_{\sigma Y_t+(1-\sigma)Z_t}G(Y_t-Z_t,Y^{(1)}_t,Y^{(1)}_t)\mathrm{d}\sigma+\big(D^{2}_{Z_t}G(Y^{(1)}_t,Y^{(1)}_t)-D_{Z_t}^2G(Z^{(1)}_t,Z^{(1)}_t)\big)\\&+\big(D_{Y_{t}}G (Y^{(2)}_t)-D_{Z_{t}}G (Z^{(2)}_t)\big)
		\end{align*}
		Therefore,
		\begin{align}\label{V1}
			\Vert G(Y)^{(2)}-G(Z)^{(2)}\Vert_{\infty,[a,b]}\lesssim\vertiii{Y-Z}_{\mathcal{D}^{\gamma}_{\mathbf{X}}([a,b])}(1+\vertiii{Y}^{2}_{\mathcal{D}^{\gamma}_{\mathbf{X}}([a,b])}+\vertiii{Z}^{2}_{\mathcal{D}^{\gamma}_{\mathbf{X}}([a,b])})
		\end{align}
		Also, from Remark \ref{VCYT^&*} 
		\begin{align*}
			&\Vert \delta (G(Y)^{(2)}-G(Z)^{(2)})_{s,t}\Vert\leq \Vert\delta(G(Y)^{(2)})_{s,t} \Vert+\Vert\delta(G(Z)^{(2)})_{s,t} \Vert\\&\lesssim (t-s)^{\gamma}(1+\Vert\mathbf{X}\Vert_{\gamma,[a,b]}^2)(1+\vertiii{Y}^{2}_{\mathcal{D}^{\gamma}_{\mathbf{X}}([a,b])}+\vertiii{Z}^{2}_{\mathcal{D}^{\gamma}_{\mathbf{X}}([a,b])}).
		\end{align*}
		Consequently,
		\begin{align}\label{V2}
			\begin{split}
				&\Vert \delta (G(Y)^{(2)}-G(Z)^{(2)})_{s,t}\Vert\lesssim \Vert \delta (G(Y)^{(2)}-G(Z)^{(2)})_{s,t}\Vert^{\kappa}\Vert G(Y)^{(2)}-G(Z)^{(2)}\Vert_{\infty,[a,b]}^{1-\kappa}\\&\quad\lesssim (t-s)^{\kappa\gamma}\vertiii{Y-Z}_{\mathcal{D}^{\gamma}_{\mathbf{X}}([a,b])}^{1-\kappa} (1+\Vert\mathbf{X}\Vert_{\gamma,[a,b]}^2)(1+\vertiii{Y}^{2}_{\mathcal{D}^{\gamma}_{\mathbf{X}}([a,b])}+\vertiii{Z}^{2}_{\mathcal{D}^{\gamma}_{\mathbf{X}}([a,b])}).
			\end{split}
		\end{align}
		From the assumption on \(G\),
		\begin{align}\label{V3}
			\begin{split}
				&\max\left\lbrace  \Vert G(Y)-G(Z)\Vert_{\infty,[u,v]} \Vert G(Y)^{(1)}-G(Z)^{(1)}\Vert_{\infty,[a,b]},\Vert G(Y)^{(2)}-G(Z)^{(2)}\Vert_{\infty,[a,b]}\right\rbrace\\
				& \lesssim\vertiii{Y-Z}_{\mathcal{D}^{\gamma}_{\mathbf{X}}([u,v])}(1+\vertiii{Y}^{2}_{\mathcal{D}^{\gamma}_{\mathbf{X}}([a,b])}+\vertiii{Z}^{2}_{\mathcal{D}^{\gamma}_{\mathbf{X}}([a,b])}).
			\end{split}
		\end{align} 
		Since \(G\) is a \(\mathrm{Lip}^{3+r}\)-vector field, it follows from \eqref{T3} and Remark~\ref{VCYT^&*} that
		\begin{align}\label{v4}
			\begin{split}
				&\max\left\lbrace \Vert (G(Y))^{\#}-(G(Z))^{\#}\Vert\rbrace_{3\kappa\gamma,[u,v]},\Vert \big(G(Y)^{(1)}\big)^{\#}-\big(G(Z)^{(1)}\big)^{\#}\Vert_{2\kappa\gamma,[u,v]}\right\rbrace\\
				&\lesssim\max\left\lbrace \vertiii{Y-Z}_{\mathcal{D}^{\gamma}_{\mathbf{X}}([u,v])}^r,\vertiii{Y-Z}_{\mathcal{D}^{\gamma}_{\mathbf{X}}([u,v])} \right\rbrace\left(1+ \vertiii{Y}_{\mathcal{D}^{\gamma}_{\mathbf{X}}([u,v])}^9+\vertiii{Z}_{\mathcal{D}^{\gamma}_{\mathbf{X}}([u,v])}^9+\Vert\mathbf{X}\Vert_{\gamma,[u,v]}^9\right).
			\end{split}
		\end{align}
		The claim now follows from \eqref{V1}–\eqref{v4}.
	\end{proof}
	We can now define the integral with respect to \(\mathbf{X}\).
	\begin{lemma}
		Assume in Definition~\ref{DDCD} that \( W = \mathcal{L}(\mathbb{R}^d, \mathbb{R}^m) \) and \( 3\gamma_1 + \gamma > 1 \). Then, the rough integral
		\begin{align}\label{SEW}
			\int_{s}^{t}Y_{\tau}\mathrm{d}\mathbf{X}_{\tau}:=\lim_{\substack{|\pi|\rightarrow 0,\\ \pi=\lbrace s = \tau_0 < \tau_{1} < \ldots < \tau_{k}=t \rbrace}}\sum_{0\leq j<k}\big{[}Y_{\tau_j} (\delta X)_{\tau_j,\tau_{j+1}}+Y^{(1)}_{\tau_j}\mathbb{X}^{2}_{\tau_j,\tau_{j+1}}+Y^{(2)}_s\mathbb{X}^{3}\big{]}
		\end{align}
		exists. Furthermore  $\big(\int_{u}Y_{\tau}\mathrm{d}\mathbf{X}_{\tau},Y,Y^{(1)}\big)\in \mathcal{D}^{\gamma}_{\mathbf{X}}([u,b])$ and
		\begin{align}\label{SSWWEE}
			\begin{split}
				&u,v\in [a,b], \;  u\leq v\colon  \ \ \big\Vert\int_{u}^{v}Y_{\tau}\circ\mathrm{d}\mathbf{X}_{\tau}-Y_{u} (\delta X)_{u,v}-Y^{(1)}_{u}\mathbb{X}^{2}_{u,v}-Y^{(2)}_u\mathbb{X}^{3}_{u,v}\big\Vert\\& \lesssim
				\bigg[\Vert Y^{\#}\Vert_{3\gamma_1,[u,v]} \Vert X\Vert_{\gamma,[u,v]} + \Vert (Y^{(1)})^{\#}\Vert_{2\gamma_1,[u,v]} \big\Vert\mathbb{X}^{2}\big\Vert_{2\gamma,[u,v]}+  \Vert Y^{(2)}\Vert_{\gamma_1,[u,v]}\big\Vert\mathbb{X}^{3}\big\Vert_{3\gamma,[u,v]}\bigg] (v-u)^{3\gamma_1+\gamma}.
			\end{split}
		\end{align}
	\end{lemma}
	\begin{proof}
		The definition and well-definedness of \eqref{SEW} follow from the usual Sewing lemma \cite[Lemma 4.2]{FH20}. Additionally, \eqref{SSWWEE} is a direct consequence of the same lemma.
	\end{proof}
	Let us now make several remarks before proceeding to the next step.	\begin{remark}\label{BVCZRTGFV}
		Let \( u, v \in [a, b] \) with \( u \leq v \), and suppose that \( 3\gamma_1 > 2\gamma \). Then, from \eqref{SSWWEE}, we have:
		\begin{align}\label{NMUIO}
			\begin{split}
				& \vertiii{\big(\int_{u}Y_{\tau}\mathrm{d}\mathbf{X}_{\tau},Y,Y^{(1)}\big)}_{\mathcal{D}^{\gamma}_{\mathbf{X}}([u,v])}\lesssim \Vert Y_u\Vert+\Vert Y_{u}^{(1)}\Vert\Vert\mathbf{X}\Vert_{\gamma,[u,v]}^2+\Vert Y_{u}^{(2)}\Vert\Vert\mathbf{X}\Vert_{\gamma,[u,v]}^3\\&\quad+\vertiii{(Y,Y^{(1)},Y^{(2)})}_{\mathcal{D}^{\gamma_1}_{\mathbf{X}}([u,v])}(1+	\Vert\mathbf{X}\Vert_{\gamma,[u,v]}^3)(v-u)^{3\gamma_1-2\gamma}.
			\end{split}
		\end{align}
		In particular,
		\begin{align}\label{NMUIO_1}
			\vertiii{\big(\int_{u}Y_{\tau}\mathrm{d}\mathbf{X}_{\tau},Y,Y^{(1)}\big)}_{\mathcal{D}^{\gamma}_{\mathbf{X}}([u,v])}\lesssim \vertiii{(Y,Y^{(1)},Y^{(2)})}_{\mathcal{D}^{\gamma_1}_{\mathbf{X}}([u,v])}(1+	\Vert\mathbf{X}\Vert_{\gamma,[u,v]}^3).
		\end{align}
	\end{remark}
	\begin{remark}
		Recall that in equation \eqref{SDE}, the solution is given by
		\begin{align*}
			Z_t = Z_0 + \int_{0}^{t} V(Z_\tau) \, \mathrm{d} \mathbf{X}_\tau + \int_{0}^{t} V_{0}(Z_\tau) \, \mathrm{d}\tau.
		\end{align*}
		Here, \( \big(Z, V(Z), D_{Z}V(V(Z))\big) \in \mathcal{D}^{\gamma}_{\mathbf{X}} \), and the integral 
		\[
		\int_{0}^{t} V(Z_\tau) \, \mathrm{d}\mathbf{X}_\tau
		\]
		is defined in the sense of \eqref{SEW}.
	\end{remark}
		Recall that we assumed an a priori bound for the solution to  \eqref{SDE} in the \( C^{\gamma} \)-norm in \eqref{sup_norm}. However, it turns out that the \( C^{\gamma} \)-norm is not sufficient for our purposes. Thus, we conduct a more detailed analysis of this equation and derive an a priori bound in the \( \mathcal{D}^{\gamma}_{\mathbf{X}} \)-norm in the forthcoming proposition.

\begin{proposition}\label{RS17}
	Recall that Assumption~\ref{ASSVVNN} holds. Furthermore, suppose that \( \gamma \neq \frac{1}{3} \), and let \( (\phi^{t}_{\mathbf{X}}(z_0))_{t \geq 0} \) denote the solution to \eqref{SDE}. For \( s \leq t \), define
	\begin{align}\label{AAABB}
		\begin{split}
			\phi_{\mathbf{X}}(z_0)^{\#}_{s,t} &\coloneqq (\delta\phi_{\mathbf{X}}(z_0))_{s,t} - V(\phi^{s}_{\mathbf{X}}(z_0)) \delta X_{s,t} -D_{\phi^{s}_{\mathbf{X}}(z_0)} V\big( V(\phi^{s}_{\mathbf{X}}(z_0))\big) \mathbb{X}_{s,t}^2 \quad \text{and}\\ 
			\big(\phi_{\mathbf{X}}(z_0)^{(1)}\big)^{\#}_{s,t} &\coloneqq \big(\delta V(\phi_{\mathbf{X}}(z_0))\big)_{s,t}-D_{\phi^{s}_{\mathbf{X}}(z_0)}V \big( V(\phi^{s}_{\mathbf{X}}(z_0))\big)\delta X_{s,t}. \\
		\end{split}
	\end{align}
	Then 
	for the a polynomial $Q$  
	which depends on $ V$ and $V_0$, we have
	\begin{align}\label{BNNMM}
		&\vertiii{\phi_{\mathbf{X}}(z_0)}_{\mathcal{D}^{\gamma}_{\mathbf{X}}([0, T])} \leq  Q(\Vert z_0\Vert,\Vert\mathbf{X}\Vert_{\gamma,[0,T]}).
	\end{align}
\end{proposition}
\begin{proof}
	From \eqref{SDE}, \( \phi_{\mathbf{X}}(z_0) \) is controlled by \( \mathbf{X} \) and		\begin{align*}
		&\phi^{s}_{\mathbf{X}}(z_0)^{(1)} = V(\phi^{s}_{\mathbf{X}}(z_0)),\\
		&\phi^{s}_{\mathbf{X}}(z_0)^{(2)} = D_{\phi^{s}_{\mathbf{X}}(z_0)} V \big( V(\phi^{s}_{\mathbf{X}}(z_0))\big).
	\end{align*}
	Using the first identity in \eqref{DecoU} together with the expressions in \eqref{AAABB}, we obtain
	\begin{align}\label{VVVZZZ}
		\begin{split}
			\big(\phi_{\mathbf{X}}(z_0)^{(1)}\big)^{\#}_{s,t} &= D_{\phi^{s}_{\mathbf{X}}(z_0)}V\bigg(\phi_{\mathbf{X}}(z_0)^{\#}_{s,t} + \big(D_{\phi^{s}_{\mathbf{X}}(z_0)}V\big( V(\phi^{s}_{\mathbf{X}}(z_0))\big)\big)\mathbb{X}_{s,t}^2\bigg) \\
			&\quad +\int_{0}^{1}(1-\sigma) D^{2}_{\sigma \phi^{t}_{\mathbf{X}}(z_0)+(1-\sigma)\phi^{s}_{\mathbf{X}}(z_0)} V\big((\delta\phi_{\mathbf{X}}(z_0))_{s,t},(\delta\phi_{\mathbf{X}}(z_0))_{s,t}\big)\mathrm{d}\sigma.
		\end{split}
	\end{align}
	Thus, it is sufficient to find a polynomial bound for \( \|\phi_{\mathbf{X}}(z_0)^{\#}\|_{3\gamma,[0,T]} \). Recall that \( V: \mathbb{R}^m \to \mathcal{L}(\mathbb{R}^d, \mathbb{R}^m) \). We expand \( V(\phi^{s}_{\mathbf{X}}(z_0)) \) as in \eqref{expansion}. From Lemma~\ref{expansion_2}, 
	\begin{align}\label{expa_1}
		\begin{split}
			V(\phi^{s}_{\mathbf{X}}(z_0))^{(1)} &= D_{\phi^{s}_{\mathbf{X}}(z_0)}V\big(V(\phi^{s}_{\mathbf{X}}(z_0)\big),\\
			V(\phi^{s}_{\mathbf{X}}(z_0))^{(2)} &= D^{2}_{\phi^{s}_{\mathbf{X}}(z_0)}V\bigg(V\big(\phi^{s}_{\mathbf{X}}(z_0)\big),V\big(\phi^{s}_{\mathbf{X}}(z_0)\big)\bigg)+ D_{\phi^{s}_{\mathbf{X}}(z_0)}V \bigg(D_{\phi^{s}_{\mathbf{X}}(z_0)}V \big(V(\phi^{s}_{\mathbf{X}}(z_0))\big)\bigg).
		\end{split}
	\end{align}
	Also,
	\begin{align}\label{expa_2}
		\begin{split}
			V(\phi_{\mathbf{X}}(z_0))^{\#}_{s,t} &= D_{\phi^{s}_{\mathbf{X}}(z_0)}V\big(\phi_{\mathbf{X}}(z_0)^{\#}_{s,t}\big) \\
			&\quad + \frac{1}{2}D^{2}_{\phi^{s}_{\mathbf{X}}(z_0)}V\bigg(V\big(\phi^{s}_{\mathbf{X}}(z_0)\big) \delta X_{s,t},D_{\phi^{s}_{\mathbf{X}}(z_0)} V \big( V(\phi^{s}_{\mathbf{X}}(z_0))\big)\mathbb{X}_{s,t}^2 + \phi_{\mathbf{X}}(z_0)^{\#}_{s,t}\bigg)\\ 
			&\quad + \frac{1}{2}D^{2}_{\phi^{s}_{\mathbf{X}}(z_0)}V\bigg(D_{\phi^{s}_{\mathbf{X}}(z_0)} V \big( V(\phi^{s}_{\mathbf{X}}(z_0))\big)\mathbb{X}_{s,t}^2 + \phi_{\mathbf{X}}(z_0)^{\#}_{s,t},(\delta\phi_{\mathbf{X}}(z_0))_{s,t}\bigg) \\
			&\quad + \int_{0}^{1}\frac{(1-\sigma)^2}{2}D^{3}_{\sigma \phi^{t}_{\mathbf{X}}(z_0)+(1-\sigma)\phi^{s}_{\mathbf{X}}(z_0)}V\bigg((\delta\phi_{\mathbf{X}}(z_0))_{s,t},(\delta\phi_{\mathbf{X}}(z_0))_{s,t},(\delta\phi_{\mathbf{X}}(z_0))_{s,t}\bigg) \, \mathrm{d}\sigma,\\
			\big(V(\phi_{\mathbf{X}}(z_0))^{(1)}\big)^{\#}_{s,t} &= D_{\phi^{s}_{\mathbf{X}}(z_0)}^{2} V\bigg(  D_{\phi^{s}_{\mathbf{X}}(z_0)}V( V(\phi^{s}_{\mathbf{X}}(z_0)))\mathbb{X}_{s,t}^2+\phi_{\mathbf{X}}(z_0)^{\#}_{s,t} ,V(\phi^{s}_{\mathbf{X}}(z_0))\bigg) \\
			&\quad + D_{\phi^{s}_{\mathbf{X}}(z_0)}V\big((\phi_{\mathbf{X}}(z_0)^{(1)})^{\#}_{s,t}\big) \\
			&\quad + \int_{0}^{1}(1-\sigma)D^{3}_{\sigma\phi^{t}_{\mathbf{X}}(z_0)+(1-\sigma)\phi^{s}_{\mathbf{X}}(z_0)}V\big((\delta\phi_{\mathbf{X}}(z_0))_{s,t},(\delta\phi_{\mathbf{X}}(z_0))_{s,t},V(\phi^{s}_{\mathbf{X}}(z_0))\big) \, \mathrm{d}\sigma \\
			&\quad + (D_{\phi^{t}_{\mathbf{X}}(z_0)}V-D_{\phi^{s}_{\mathbf{X}}(z_0)}V)((\delta V(\phi_{\mathbf{X}}(z_0)))_{s,t}) .
		\end{split}
	\end{align}
	From Lemma \ref{expansion_2}, the tuple \( \big(V(\phi^{s}_{\mathbf{X}}(z_0)), V(\phi^{s}_{\mathbf{X}}(z_0))^{(1)}, V(\phi^{s}_{\mathbf{X}}(z_0))^{(2)}\big) \) is controlled by \( \mathbf{X} \), i.e.,
	\begin{align*}
		&(\delta V(\phi_{\mathbf{X}}(z_0)))_{s,t}=V(\phi^{s}_{\mathbf{X}}(z_0))^{(1)} \delta X_{s,t}+V(\phi^{s}_{\mathbf{X}}(z_0))^{(2)}\mathbb{X}^{2}_{s,t}+	V(\phi_{\mathbf{X}}(z_0))^{\#}_{s,t}\\
		&\quad \big(\delta V(\phi_{\mathbf{X}}(z_0))^{(1)}\big)_{s,t}=V(\phi^{s}_{\mathbf{X}}(z_0))^{(2)}\delta X_{s,t}+	\big(V(\phi_{\mathbf{X}}(z_0))^{(1)}\big)^{\#}_{s,t}.
	\end{align*}
	Note that,
	\begin{align*}
		&\phi^{t}_{\mathbf{X}}(z_0)-\phi^{s}_{\mathbf{X}}(z_0) \\
		=\ &\int_{s}^{t}V(\phi^{\tau}_{\mathbf{X}}(z_0))\mathrm{d}\mathbf{X}_\tau - V(\phi^{s}_{\mathbf{X}}(z_0)) X_{s,t}-V(\phi^{s}_{\mathbf{X}}(z_0))^{(1)}\mathbb{X}^{2}_{s,t} -V(\phi^{s}_{\mathbf{X}}(z_0))^{(2)}\mathbb{X}^{3}_{s,t} \\
		&\quad + \int_{s}^{t}V_{0}(\phi^{\tau}_{\mathbf{X}}(z_0)) \, \mathrm{d}\tau + V(\phi^{s}_{\mathbf{X}}(z_0)) X_{s,t} + V(\phi^{s}_{\mathbf{X}}(z_0))^{(1)}\mathbb{X}^{2}_{s,t}+V(\phi^{s}_{\mathbf{X}}(z_0))^{(2)}\mathbb{X}^{3}_{s,t}.
	\end{align*}
	From \eqref{SSWWEE},
	\begin{align}\label{Sewing_11}
		\begin{split}
			&\left\| \int_{s}^{t}V(\phi^{\tau}_{\mathbf{X}}(z_0))\mathrm{d}\mathbf{X}_\tau - V(\phi^{s}_{\mathbf{X}}(z_0)) \delta X_{s,t}-V(\phi^{s}_{\mathbf{X}}(z_0))^{(1)}\mathbb{X}^{2}_{s,t} -V(\phi^{s}_{\mathbf{X}}(z_0))^{(2)}\mathbb{X}^{3}_{s,t}\right\| \\ 
			\leq\  &C_2\bigg[ \big\Vert \big(V(\phi_{\mathbf{X}}(z_0))\big)^{\#}\big\Vert_{3\gamma,[s,t]}\Vert\mathbf{X}\Vert_{\gamma,[s,t]} + \big\Vert \big(V(\phi_{\mathbf{X}}(z_0))^{(1)}\big)^{\#}\big\Vert_{2\gamma,[s,t]}\Vert\mathbf{X}\Vert_{\gamma,[s,t]}^2 \\
			&\qquad + \big\Vert V(\phi^{.}_{\mathbf{X}}(z_0))^{(2)}\big\Vert_{\gamma,[s,t]}\Vert\mathbf{X}\Vert_{\gamma,[s,t]}^3 \bigg](t-s)^{4\gamma}.
		\end{split}
	\end{align}
	Moreover,
	\begin{align}\label{sharp}
		\big(\phi_{\mathbf{X}}(z_0)\big)^{\#}_{s,t}=\int_{s}^{t}V(\phi^{\tau}_{\mathbf{X}}(z_0))\mathrm{d}\mathbf{X}_\tau-V(\phi^{s}_{\mathbf{X}}(z_0)) \delta X_{s,t}-	V(\phi^{s}_{\mathbf{X}}(z_0))^{(1)} \mathbb{X}^{2}_{s,t}+\int_{s}^{t}V_{0}(\phi^{\tau}_{\mathbf{X}}(z_0)) \, \mathrm{d}\tau.
	\end{align}
	Assume that \([s,t] \subseteq [0,T]\). From \eqref{expa_1}, \eqref{expa_2}, and the assumptions on \(V\), we can find a constant \(M_1\) such that
	\begin{align}\label{(1)}
		\begin{split}
			&\big\Vert \big(V(\phi_{\mathbf{X}}(z_0))\big)^{\#}\big\Vert_{3\gamma,[s,t]}\\&\quad\leq M_1 \bigg[\big(1+\Vert\mathbf{X}\Vert_{\gamma,[s,t]} + 
			\Vert \phi_{\mathbf{X}}(z_0)\Vert_{\gamma,[s,t]}\big)\Vert\phi_{\mathbf{X}}(z_0)^{\#}\Vert_{3\gamma,[s,t]}\\&\quad+\big( \Vert \phi_{\mathbf{X}}(z_0)\Vert_{\gamma,[s,t]}^2+\Vert\mathbf{X}\Vert_{\gamma,[s,t]}^2\big)\Vert \phi_{\mathbf{X}}(z_0)\Vert_{\gamma,[s,t]}+\Vert\mathbf{X}\Vert_{\gamma,[s,t]}^3\bigg].
		\end{split}
	\end{align}
	Also from  \eqref{VVVZZZ}, \eqref{expa_1} and \eqref{expa_2}
	\begin{align}\label{(2)}
		\begin{split}
			\big\Vert \big(V(\phi_{\mathbf{X}}(z_0))^{(1)}\big)^{\#}\big\Vert_{2\gamma,[s,t]}&\leq M_1 \big[\Vert\phi_{\mathbf{X}}(z_0)^{\#}\Vert_{3\gamma,[s,t]}+\Vert \phi_{\mathbf{X}}(z_0)\Vert_{\gamma,[s,t]}^2+\Vert\mathbf{X}\Vert_{\gamma,[s,t]}^2\big],
			\\ \big\Vert V(\phi^{.}_{\mathbf{X}}(z_0))^{(2)}\big\Vert_{\gamma,[s,t]}&\leq M_{1}\Vert \phi_{\mathbf{X}}(z_0)\Vert_{\gamma,[s,t]}.
		\end{split}
	\end{align}
	The goal is to use inequality \eqref{Sewing_11} to obtain a priori bounds for \( \big\Vert \big(\phi_{\mathbf{X}}(z_0)\big)^{\#} \big\Vert_{3\gamma,[s,t]} \) on an arbitrary interval \( [s,t] \). The idea is to apply the bounds obtained in \eqref{(1)} and \eqref{(2)}, and substitute them into the right-hand side of \eqref{Sewing_11}. Assume that \( [s,t] \subseteq [0,T] \) and \( t - s \leq 1 \). Recall \eqref{sup_norm} and the assumption on \( V_0 \) stated in \eqref{BNM<<<}. Since \( (t - s)^{\gamma} \leq (t - s)^{1 - 3\gamma} \), after substituting the bounds, it follows that there exist two increasing polynomials \( Q_1 \) and \( Q_2 \) (in both variables) such that
	\begin{align}\label{ASDC}
		\begin{split}
			&\big\Vert \big(\phi_{\mathbf{X}}(z_0)\big)^{\#}\big\Vert_{3\gamma,[s,t]}\leq\\&(t-s)^{1-3\gamma}Q_{1}(\Vert\mathbf{X}\Vert_{\gamma,[s,t]} , 
			\Vert \phi_{\mathbf{X}}(z_0)\Vert_{C^{\gamma},[s,t]})\Vert\phi_{\mathbf{X}}(z_0)^{\#}\Vert_{3\gamma,[s,t]}+Q_2(\Vert\mathbf{X}\Vert_{\gamma,[s,t]} , 
			\Vert \phi_{\mathbf{X}}(z_0)\Vert_{C^{\gamma},[s,t]}).
		\end{split}
	\end{align}
	Let \( \epsilon \in (0,1) \) and 
	\begin{align}\label{PPP}
		\tau^{1-3\gamma} Q_1\big(\Vert\mathbf{X}\Vert_{\gamma,[0,T]},\Vert \phi_{\mathbf{X}}(z_0)\Vert_{C^{\gamma},[0,T]}\big)
		= 1-\epsilon.
	\end{align}
	We choose a finite sequence \( (\tau_n)_{0 \leq n \leq N(\epsilon,\mathbf{X},z_0)} \) in \( [0,T] \), such that \( \tau_0 = 0 \), \( \tau_{N(\epsilon,\mathbf{X},z_0)} = T \), and for all \( 0 \leq n < N(\epsilon,\mathbf{X},z_0) - 1 \), we have \( \tau_{n+1} - \tau_n = \tau \).
	Then, it follows from \eqref{ASDC} that, for \( M_{\epsilon} = \frac{1}{\epsilon} \) and all \( 0 \leq n < N(\epsilon, \mathbf{X}, z_0) - 1 \),
	\begin{align}\label{NBBBV}
		\begin{split}
			&\big\Vert \big(\phi_{\mathbf{X}}(z_0)\big)^{\#}\big\Vert_{3\gamma,[\tau_n,\tau_{n+1}]}\leq M_{\epsilon}Q_2\big(\Vert\mathbf{X}\Vert_{\gamma,[0,T]},\Vert \phi_{\mathbf{X}}(z_0)\Vert_{C^{\gamma},[0,T]}\big).
		\end{split}
	\end{align}
	Note that from \eqref{PPP},
	\begin{align}\label{CFCF}
		N(\epsilon,\mathbf{X},z_0) =\lfloor\frac{T}{\tau}\rfloor +1= \big\lfloor T M_{\epsilon}^{\frac{1}{1-3\gamma}}Q_1(\Vert\mathbf{X}\Vert_{\gamma,[0,T]},\Vert \phi_{\mathbf{X}}(z_0)\Vert_{C^{\gamma},[0,T]})^{\frac{1}{1-3\gamma}} \big\rfloor +1 .
	\end{align}
	Assume $\tau<\nu<\upsilon$, then from \eqref{sharp},
	\begin{align}\label{(4)}
		\phi_{\mathbf{X}}(z_0) ^{\#}_{\tau,\upsilon}=\phi_{\mathbf{X}}(z_0) ^{\#}_{\tau,\nu}+\phi_{\mathbf{X}}(z_0)^{\#}_{\nu,\upsilon}+\big(\phi_{\mathbf{X}}(z_0)^{(1)}\big)^{\#}_{\tau,\nu} \delta X_{\nu,\upsilon}+\delta\big(D_{\phi_{\mathbf{X}}(z_0)}V\big(V(\phi_{\mathbf{X}}(z_0))\big)\big)_{\tau,\nu}\mathbb{X}_{\nu,\upsilon}^2.
	\end{align}		
	Therefore, for a constant \( M_2 > 0 \), we obtain
	\begin{align*}
		&\Vert\phi_{\mathbf{X}}(z_0) ^{\#}\Vert_{3\gamma,[\tau,\upsilon]}\leq \Vert\phi_{\mathbf{X}}(z_0)^{\#}\Vert_{3\gamma,[\tau,\nu]}+\Vert\phi_{\mathbf{X}}(z_0)^{\#}\Vert_{3\gamma,[\nu,\upsilon]}\\ &\quad+\Vert(\phi_{\mathbf{X}}(z_0)^{(1)})^{\#}\Vert_{2\gamma,[\tau,\nu]}\Vert\mathbf{X}\Vert_{\gamma,[0,T]}+M_2\Vert\phi_{\mathbf{X}}(z_0)\Vert_{\gamma,[0,T]}\Vert\mathbf{X}\Vert_{\gamma,[0,T]}^2.
	\end{align*}
	From \eqref{VVVZZZ}, we can find a constant $M_3$ such that for every $[\tau,\nu]\subseteq [0,T]$,
	\begin{align}\label{(7)}
		\begin{split}
			&	\big\Vert\big(\phi_{\mathbf{X}}(z_0)^{(1)}\big)^{\#}\big\Vert_{2\gamma,[\tau,\nu]}\leq M_3\bigg(\big\Vert\big(\phi_{\mathbf{X}}(z_0)\big)^{\#}\big\Vert_{3\gamma,[\tau,\nu]}+\Vert\mathbf{X}\Vert^{2}_{\gamma,[0,T]}+\Vert \phi_{\mathbf{X}}(z_0)\Vert_{\gamma,[0,T]}^2\bigg).
		\end{split}
	\end{align}
	From and \eqref{(4)} and \eqref{(7)},
	\begin{align}\label{(8)}
		\begin{split}
			&\Vert\phi_{\mathbf{X}}(z_0) ^{\#}\Vert_{3\gamma,[\tau,\upsilon]}\leq (1+M_3\Vert\mathbf{X}\Vert_{\gamma,[0,T]})\Vert\phi_{\mathbf{X}}(z_0)^{\#}\Vert_{3\gamma,[\tau,\nu]}+\Vert\phi_{\mathbf{X}}(z_0)^{\#}\Vert_{3\gamma,[\nu,\upsilon]}\\ &\quad+ M_4\Vert\mathbf{X}\Vert_{\gamma,[0,T]}\big(\Vert\mathbf{X}\Vert^{2}_{\gamma,[0,T]}+\Vert \phi_{\mathbf{X}}(z_0)\Vert_{\gamma,[0,T]}^2\big),
		\end{split}
	\end{align}
	where $M_4$ is a constant.
	Thus, from \eqref{NBBBV} and $0\leq n<N(\epsilon,\mathbf{X},z_0)-1$,

	\begin{align}\label{BVVVCCZZS}
		\begin{split}
			&\Vert\phi_{\mathbf{X}}(z_0) ^{\#}\Vert_{3\gamma,[\tau_{n},T]}\leq (1+M_3\Vert\mathbf{X}\Vert_{\gamma,[0,T]})\Vert\phi_{\mathbf{X}}(z_0)^{\#}\Vert_{3\gamma,[\tau_{n},\tau_{n+1}]}+\Vert\phi_{\mathbf{X}}(z_0) ^{\#}\Vert_{3\gamma,[\tau_{n+1},T]}\\
			&\quad+M_4\Vert\mathbf{X}\Vert_{\gamma,[0,T]}\big(\Vert\mathbf{X}\Vert^{2}_{\gamma,[0,T]}+\Vert \phi_{\mathbf{X}}(z_0)\Vert_{\gamma,[0,T]}^2\big)\\
			&\qquad \leq M_{\epsilon}(1+M_3\Vert\mathbf{X}\Vert_{\gamma,[0,T]})Q_2(\Vert\mathbf{X}\Vert_{\gamma,[0,T]},\Vert \phi_{\mathbf{X}}(z_0)\Vert_{C^{\gamma},[0,T]})\\&\myquad[3] + M_4\Vert\mathbf{X}\Vert_{\gamma,[0,T]}\big(\Vert\mathbf{X}\Vert^{2}_{\gamma,[0,T]}+\Vert \phi_{\mathbf{X}}(z_0)\Vert_{\gamma,[0,T]}^2\big)+\Vert\phi_{\mathbf{X}}(z_0) ^{\#}\Vert_{3\gamma,[\tau_{n+1},T]}\\
			&\myquad[4]=Q_{4}(\Vert\mathbf{X}\Vert_{\gamma,[0,T]},\Vert \phi_{\mathbf{X}}(z_0)\Vert_{C^{\gamma},[0,T]})+\Vert\phi_{\mathbf{X}}(z_0) ^{\#}\Vert_{3\gamma,[\tau_{n+1},T]}.
		\end{split}
	\end{align}
	So, if we start with $\tau_0=0$ and recursively apply on \eqref{BVVVCCZZS},
	\begin{align}\label{M<OO}
		\Vert\phi_{\mathbf{X}}(z_0) ^{\#}\Vert_{3\gamma,[0,T]}\leq (	N(\epsilon,\mathbf{X},z_0)+1)Q_{4}(\Vert\mathbf{X}\Vert_{\gamma,[0,T]},\Vert \phi_{\mathbf{X}}(z_0)\Vert_{C^{\gamma},[0,T]}).
	\end{align}
	From \eqref{sup_norm} and \eqref{CFCF}, we can observe that \( N(\epsilon, \mathbf{X}, z_0) \) exhibits polynomial growth in terms of \( \Vert \mathbf{X} \Vert_{\gamma, [0,T]} \) and \( \Vert z_0 \Vert \). Therefore, our claim follows from \eqref{sup_norm} and \eqref{M<OO}.
\end{proof}
	In the previous proposition, we obtained an a priori bound for the solution map in the \( \mathcal{D}^{\gamma}_{\mathbf{X}} \)-norm, which was shown to be polynomial in both the norm of the initial value and \( \Vert \mathbf{X} \Vert_{\gamma,[0,T]} \). As we will see in the following result, this estimate is crucial: it enables us to establish an a priori bound for the linearized equation. Such a bound is essential for the application of the multiplicative ergodic theorem.
\begin{proposition}\label{DDSD}
	Let there exist an increasing polynomial \( P_1 \) such that
	\begin{align}\label{41}
		\forall z\in\mathbb{R}^m:\ \  \Vert D_{z}V_{0}\Vert\leq P_{1}(\Vert z\Vert).
	\end{align}
	Then the solution to equation~\eqref{SDE} is differentiable, and there exists a polynomial \( Q_1 \) such that
	\begin{align}\label{PPOL}
		\vertiii{D_{z_0}\phi_{\mathbf{X}}[\Bar{z}]}_{\mathcal{D}^{\gamma}_{\mathbf{X}}([0,T])}\leq \Vert\Bar{z}\Vert \exp\big(Q_{1}(\Vert z_0\Vert,\Vert\mathbf{X}\Vert_{\gamma,[0,T]})\big)
	\end{align}
	for every \( \bar{z}, z_0 \in \mathbb{R}^m \) and \( \gamma \neq \frac{1}{3} \).
\end{proposition}
\begin{proof}
	Note that by our regularity assumption on $V_0$ and $V$, $\phi_{\mathbf{X}}^{t}(.)$ is differentiable with respect to initial values. 
	To show this claim, let $\delta: \mathbb{R}^m\rightarrow \mathbb{R}$, be a $C^{\infty}-$ function such that $\text{support}(\delta)\subset B(0,2)$ and $\delta|_{B(0,1)}=1$. For $R>0$, set $\overline{V}_0(z):=\delta (\frac{z}{R})V_{0}(z)$. Consider the following equation
	\begin{align}\label{CXC}
		\mathrm{d}Z_{t} = V(Z_t) \, \mathrm{d} \mathbf{X}_t+\overline{V}_0(Z_t)\, \mathrm{d}t, \ \ \  Z_{0}=z_0\in \mathbb{R}^m.
	\end{align}
	By construction, \( \overline{V}_0 \in C^{1}_b \). Therefore, under the assumptions on \( V \), and by \cite[Theorem 11.6]{FV10}, the solutions of \eqref{CXC} are differentiable with respect to the initial condition.
	Note that the solutions to \eqref{CXC} coincide locally in time with those of \eqref{SDE} in a neighborhood of \( z \). Since \( z \) and \( R \) are arbitrary, we can conclude that the solutions to \eqref{SDE} are also differentiable with respect to the initial condition. Moreover, the derivative satisfies the following equation:
	\begin{align}\label{NMNMNAA}
		\mathrm{d}D_{z_0}\phi^{t}_{\mathbf{X}}[\Bar{z}]=D_{\phi^{t}_{\mathbf{X}}(z_0)}V (D_{z_0}\phi^{t}_{\mathbf{X}}[\Bar{z}])\mathrm{d}\mathbf{X}_t+D_{\phi^{t}_{\mathbf{X}}(z_0)}V_0 (D_{z_0}\phi^{t}_{\mathbf{X}}[\Bar{z}])\mathrm{d}t, \ \ \ D_{z_0}\phi^{0}_{\mathbf{X}}[\Bar{z}]=\Bar{z}\in \mathbb{R}^m.
	\end{align}
	We use \eqref{Linearized} to obtain the priori bound \eqref{PPOL}. Note that
	\begin{align}\label{Linearized}
		\begin{split}
			&D_{z_0}\phi^{t}_{\mathbf{X}}[\Bar{z}]-D_{z_0}\phi^{s}_{\mathbf{X}}[\Bar{z}] \\
			=\ &\int_{s}^{t}D_{\phi^{\tau}_{\mathbf{X}}(z_0)}V( D_{z_0}\phi^{\tau}_{\mathbf{X}}[\Bar{z}])\mathrm{d}\mathbf{X}_\tau -D_{\phi^{s}_{\mathbf{X}}(z_0)}V (D_{z_0}\phi^{s}_{\mathbf{X}}[\Bar{z}])\delta X_{s,t}- \big(D_{\phi^{s}_{\mathbf{X}}(z_0)}V(D_{z_0}\phi^{s}_{\mathbf{X}}[\Bar{z}])\big)^{(1)}\mathbb{X}^{2}_{s,t} \\
			&\quad -  \big(D_{\phi^{s}_{\mathbf{X}}(z_0)}V(D_{z_0}\phi^{s}_{\mathbf{X}}[\Bar{z}])\big)^{(2)}\mathbb{X}^{3}_{s,t} + \int_{s}^{t} D_{\phi^{\tau}_{\mathbf{X}}(z_0)}V_0( D_{z_0}\phi^{\tau}_{\mathbf{X}}[\Bar{z}]) \, \mathrm{d}\tau+
			D_{\phi^{s}_{\mathbf{X}}(z_0)}V (D_{z_0}\phi^{s}_{\mathbf{X}}[\Bar{z}])\delta X_{s,t} \\
			&\quad +\big(D_{\phi^{s}_{\mathbf{X}}(z_0)}V(D_{z_0}\phi^{s}_{\mathbf{X}}[\Bar{z}])\big)^{(1)}\mathbb{X}^{2}_{s,t}+ \big(D_{\phi^{s}_{\mathbf{X}}(z_0)}V(D_{z_0}\phi^{s}_{\mathbf{X}}[\Bar{z}])\big)^{(2)}\mathbb{X}^{3}_{s,t}.
		\end{split}
	\end{align}
	Since \( D_{z_0} \phi_{\mathbf{X}}[\bar{z}] \) solves \eqref{NMNMNAA}, we can calculate the terms of the expansion of \( D_{z_0} \phi_{\mathbf{X}}[\bar{z}] \) as given in \eqref{expansion}. From Lemma~\ref{expansion_2}, we obtain:
	
	\begin{align}\label{A_1}
		\begin{split}
			(D_{z_0}\phi^{s}_{\mathbf{X}}[\Bar{z}])^{(1)} &= D_{\phi^{s}_{\mathbf{X}}(z_0)}V(D_{z_0}\phi^{s}_{\mathbf{X}}[\Bar{z}]), \\
			(D_{z_0}\phi^{s}_{\mathbf{X}}[\Bar{z}])^{(2)} &=\big(D_{\phi^{s}_{\mathbf{X}}(z_0)}V(D_{z_0}\phi^{s}_{\mathbf{X}}[\Bar{z}])\big)^{(1)} \\&=D^{2}_{\phi^{s}_{\mathbf{X}}(z_0)}V\big(V\big(\phi^{s}_{\mathbf{X}}(z_0)\big),D_{z_0}\phi^{s}_{\mathbf{X}}[\Bar{z}]\big)+ D_{\phi^{s}_{\mathbf{X}}(z_0)}V \big(D_{\phi^{s}_{\mathbf{X}}(z_0)}V( D_{z_0}\phi^{s}_{\mathbf{X}}[\Bar{z}])\big), \\
			(D_{z_0}\phi_{\mathbf{X}}[\Bar{z}])^{\#}_{s,t} &= (\delta D_{z_0}\phi_{\mathbf{X}}[\Bar{z}])_{s,t}-D_{\phi^{s}_{\mathbf{X}}(z_0)}V(D_{z_0}\phi^{s}_{\mathbf{X}}[\Bar{z}])\delta X_{s,t}- \big(D_{\phi^{s}_{\mathbf{X}}(z_0)}V(D_{z_0}\phi^{s}_{\mathbf{X}}[\Bar{z}])\big)^{(1)}\mathbb{X}_{s,t}^{2} \quad \text{and} \\
			\big((D_{z_0}\phi_{\mathbf{X}}[\Bar{z}])^{1}\big)^{\#}_{s,t} &= \big(\delta D_{\phi_{\mathbf{X}}(z_0)}V( D_{z_0}\phi_{\mathbf{X}}[\Bar{z}])\big)_{s,t}-\big(D_{\phi^{s}_{\mathbf{X}}(z_0)}V(D_{z_0}\phi^{s}_{\mathbf{X}}[\Bar{z}])\big)^{(1)}\delta{X}_{s,t}.
		\end{split}
	\end{align}
	By virtue of \eqref{composite}, we have
	\begin{align}\label{CXX}
		\begin{split}
			&\vertiii{D_{\phi^{\tau}_{\mathbf{X}}(z_0)}V}_{\mathcal{D}^{\gamma}_{\mathbf{X}}([s,t])} \lesssim \big(1+ \vertiii{\phi_{\mathbf{X}}(z_0)}_{\mathcal{D}^{\gamma}_{\mathbf{X}}([u,v])}^9+\Vert\mathbf{X}\Vert_{\gamma,[s,t]}^9\big). \\
		\end{split}
	\end{align}
	By \eqref{CDCa},
	\begin{align}\label{NM855}
		\begin{split}
			&\vertiii{D_{\phi_{\mathbf{X}}(z_0)}V\big(D_{z_0}\phi_{\mathbf{X}}[\Bar{z}]\big)}_{\mathcal{D}^{\gamma}_{\mathbf{X}}([s,t])}\\&\quad\lesssim (1+\Vert\mathbf{X}\Vert_{\gamma,[s,t]}^4)\big(1+ \vertiii{\phi_{\mathbf{X}}(z_0)}_{\mathcal{D}^{\gamma}_{\mathbf{X}}([u,v])}^9+\Vert\mathbf{X}\Vert_{\gamma,[s,t]}^9\big)\vertiii{D_{z_0}\phi_{\mathbf{X}}[\Bar{z}]}_{\mathcal{D}^{\gamma}_{\mathbf{X}}([s,t])}.
		\end{split}
	\end{align}
	Assume that \( s, t \in [0, T] \) with \( s \leq t \). Then, by \eqref{NMUIO}, \eqref{A_1}, and \eqref{NM855}, we have
	\begin{align}\label{Sewing lemma}
		\begin{split}
			&\vertiii{\int_{s}D_{\phi^{\tau}_{\mathbf{X}}(z_0)}V\big(D_{z_0}\phi^{\tau}_{\mathbf{X}}[\Bar{z}]\big)\mathrm{d}\mathbf{X}_\tau}_{\mathcal{D}^{\gamma}_{\mathbf{X}}([s,t])}
			\lesssim (1+\Vert\mathbf{X}\Vert_{\gamma,[s,t]}^3)\Vert D_{z_0}\phi^{s}_{\mathbf{X}}[\Bar{z}]\Vert\\&+(t-s)^\gamma\underbrace{(1+\Vert\mathbf{X}\Vert_{\gamma,[s,t]}^4)^2\big(1+ \vertiii{\phi_{\mathbf{X}}(z_0)}_{\mathcal{D}^{\gamma}_{\mathbf{X}}([s,t])}^9+\Vert\mathbf{X}\Vert_{\gamma,[s,t]}^9\big)}_{Q_{5}( \Vert\mathbf{X}\Vert_{\gamma,[s,t]},\vertiii{\phi_{\mathbf{X}}(z_0)}_{\mathcal{D}^{\gamma}_{\mathbf{X}}([s,t])}\Vert)}\vertiii{D_{z_0}\phi_{\mathbf{X}}[\Bar{z}]}_{\mathcal{D}^{\gamma}_{\mathbf{X}}([s,t])}.
		\end{split}
	\end{align}
	From \eqref{41},
	\begin{align}\label{DDrift}
		\begin{split}
			&\vertiii{\big(\int_{s} D_{\phi^{\tau}_{\mathbf{X}}(z_0)}V_0( D_{z_0}\phi^{\tau}_{\mathbf{X}}[\Bar{z}]) \, \mathrm{d}\tau,0,0\big)}_{\mathcal{D}^{\gamma}_{\mathbf{X}}([s,t])}\\
			&\lesssim (t-s)^{1-3\gamma}P_{1}(\vertiii{\phi_{\mathbf{X}}(z_0)}_{\mathcal{D}^{\gamma}_{\mathbf{X}}([0,T])}) \vertiii{D_{z_0}\phi^{.}_{\mathbf{X}}[\Bar{z}]}_{\mathcal{D}^{\gamma}_{\mathbf{X}}([s,t])}.
		\end{split}
	\end{align}
	Let \( [s, t] \) be an arbitrary subinterval of \( [0, T] \) such that \( t - s \leq 1 \). Then, since \( (t - s)^{\gamma} \leq (t - s)^{1 - 3\gamma} \), it follows from \eqref{Sewing lemma} and \eqref{DDrift} that there exists a constant \( M > 1 \) such that
	\begin{align*}
		&\quad	\vertiii{D_{z_0}\phi_{\mathbf{X}}[\Bar{z}]}_{\mathcal{D}^{\gamma}_{\mathbf{X}}([s,t])}\leq M\bigg((1+\Vert\mathbf{X}\Vert_{\gamma,[0,T]}^3)\Vert D_{z_0}\phi^{s}_{\mathbf{X}}[\Bar{z}]\Vert\\&+(t-s)^{1-3\gamma}\big(Q_{5}( \Vert\mathbf{X}\Vert_{\gamma,[0,T]},\vertiii{\phi_{\mathbf{X}}(z_0)}_{\mathcal{D}^{\gamma}_{\mathbf{X}}([0,T])}\Vert)+P_{1}(\vertiii{\phi_{\mathbf{X}}(z_0)}_{\gamma,[0,T]})\big)\vertiii{D_{z_0}\phi_{\mathbf{X}}[\Bar{z}]}_{\mathcal{D}^{\gamma}_{\mathbf{X}}([s,t])}\bigg).
	\end{align*}

	Let $0<\epsilon<1$. We choose a finite sequence $(\tau_n)_{0\leq n\leq \tilde{N}(\epsilon,\mathbf{X},z_0)}$ in $[0,T]$, such that $\tau_0=0$ and $\tau_{\tilde{N}(\epsilon,\mathbf{X},z_0)}=T$. For $0\leq n<\tilde{N}(\epsilon,\mathbf{X},z_0)-1$, we assume $\tau_{n+1}-\tau_{n}=\tau$ and
	\begin{align*}
		\tau^{1-3\gamma}M\big(Q_{5}( \Vert\mathbf{X}\Vert_{\gamma,[0,T]},\vertiii{\phi_{\mathbf{X}}(z_0)}_{\mathcal{D}^{\gamma}_{\mathbf{X}}([0,T])}\Vert)+P_{1}(\vertiii{\phi_{\mathbf{X}}(z_0)}_{\mathcal{D}^{\gamma}_{\mathbf{X}}([0,T])})\big)=1-\epsilon.
	\end{align*}
	From \eqref{Sewing lemma}, \eqref{DDrift} and the choice of $\tau$, for $M_\epsilon=\frac{M}{\epsilon}$
	\begin{align}\label{greeding}
		\begin{split}
			&	0\leq n<\tilde{N}(\epsilon,\mathbf{X},z_0)-1:\ \ \\&\quad		
			\vertiii{D_{z_0}\phi_{\mathbf{X}}[\Bar{z}]}_{\mathcal{D}^{\gamma}_{\mathbf{X}}([\tau_n,\tau_{n+1}])}\leq M_{\epsilon}(1+\Vert\mathbf{X}\Vert_{\gamma,[0,T]}^3)\Vert D_{z_0}\phi^{\tau_n}_{\mathbf{X}}[\Bar{z}]\Vert.
		\end{split}
	\end{align} 
	Note that
	\begin{align}\label{BVBVA}
		\begin{split}
			&\tilde{N}(\epsilon,\mathbf{X},z_0) =\lfloor\frac{T}{\tau}\rfloor +1 \\&\quad=\bigg\lfloor T \big(M_\epsilon\big(Q_{5}( \Vert\mathbf{X}\Vert_{\gamma,[0,T]},\vertiii{\phi_{\mathbf{X}}(z_0)}_{\mathcal{D}^{\gamma}_{\mathbf{X}}([0,T])}\Vert)+P_{1}(\vertiii{\phi_{\mathbf{X}}(z_0)}_{\mathcal{D}^{\gamma}_{\mathbf{X}}([0,T])})\big) \big)^{\frac{1}{ 1-3\gamma}} \bigg\rfloor +1.
		\end{split}
	\end{align}
	Since \eqref{greeding} holds for every $0\leq n<\tilde{N}(\epsilon,\mathbf{X},z_0)-1$ and $M_\epsilon>1$, we conclude
	\begin{align}\label{M<LOPLKKJa}
		\sup_{0\leq n<\tilde{N}(\epsilon,\mathbf{X},z_0)-1}\vertiii{D_{z_0}\phi^{.}_{\mathbf{X}}[\Bar{z}]}_{\mathcal{D}^{\gamma}_{\mathbf{X}}([\tau_n,\tau_{n+1}])}\leq  \big(M_{\epsilon}(1+\Vert\mathbf{X}\Vert_{\gamma,[0,T]}^3)\big)^{\tilde{N}(\epsilon,\mathbf{X},z_0)+1}\Vert\Bar{z} \Vert.
	\end{align}
	From \eqref{DDFFD}, for every $0\leq n<\tilde{N}(\epsilon,\mathbf{X},z_0)-1$
	\begin{align*}
		&\vertiii{D_{z_0}\phi_{\mathbf{X}}[\Bar{z}]}_{\mathcal{D}^{\gamma}_{\mathbf{X}}([\tau_n,T]}\leq\\ \quad& (\Vert\mathbf{X}\Vert_{\gamma,[0,T]}+1)^2\vertiii{D_{z_0}\phi_{\mathbf{X}}[\Bar{z}]}_{\mathcal{D}^{\gamma}_{\mathbf{X}}([\tau_n,\tau_{n+1}]}+\vertiii{D_{z_0}\phi_{\mathbf{X}}[\Bar{z}]}_{\mathcal{D}^{\gamma}_{\mathbf{X}}([\tau_{n+1},T]}.
	\end{align*}
	Therefore from \eqref{M<LOPLKKJa}
	\begin{align}\label{BVZQWE}
		\vertiii{D_{z_0}\phi_{\mathbf{X}}[\Bar{z}]}_{\mathcal{D}^{\gamma}_{\mathbf{X}}([0,T]}\leq (\tilde{N}(\epsilon,\mathbf{X},z_0)+1)(\Vert\mathbf{X}\Vert_{\gamma,[0,T]}+1)^2 \big(M_{\epsilon}(1+\Vert\mathbf{X}\Vert_{\gamma,[0,T]}^3)\big)^{\tilde{N}(\epsilon,\mathbf{X},z_0)+1}\Vert\Bar{z} \Vert.
	\end{align}
	Recall from Proposition~\ref{RS17} that, for a polynomial \( Q \), we have		\begin{align*}
		&\vertiii{\phi_{\mathbf{X}}(z_0)}_{\mathcal{D}^{\gamma}_{\mathbf{X}}([0, T])} \leq  Q(\Vert z_0\Vert,\Vert\mathbf{X}\Vert_{\gamma,[0,T]})
	\end{align*}
	Thus, \( \tilde{N}(\epsilon, \mathbf{X}, z_0) \) has polynomial growth in terms of \( \| \mathbf{X} \|_{\gamma,[0,T]} \) and \( \| z_0 \| \). Hence, from \eqref{BVZQWE}, we can obtain \eqref{PPOL}.
\end{proof}
	\begin{remark}
		The {a priori} bound established in Proposition~\ref{DDSD} resembles the type of bound commonly obtained for linear ordinary differential equations. The principal technical distinction, of course, lies in the definition of the integrals. As anticipated, the norms of the rough drivers naturally appear as exponential terms in the resulting estimates.
		This result has several noteworthy implications. Most importantly, taking the logarithm of this bound effectively cancels the exponential term, resulting in a polynomial dependence on both the norm of the path and the initial value. In the Gaussian setting, this polynomial bound is integrable.
		This observation leads to two consequences. First, as previously mentioned, it facilitates the application of the multiplicative ergodic theorem. Second, it enables local estimates of the Lipschitz norm of the map \( z \mapsto \phi_{\mathbf{X}}(z) \). Although this norm depends on the path, it remains sufficiently controlled to deduce the existence of invariant manifolds.  
		All of these statements will be rigorously justified in the next section.
	\end{remark}
Proposition~\ref{DDSD} leads to the following result:	\begin{corollary}\label{difference}
	There exists a polynomial \( Q_2 \) such that
	\begin{align*}
		\vertiii{\phi_{\mathbf{X}}(z_1)-\phi_{\mathbf{X}}(z_0)}_{\mathcal{D}^{\gamma}_{\mathbf{X}}([0, T])}\leqslant \Vert z_1-z_0\Vert\exp\big(Q_2(\Vert z_0\Vert,\Vert z_1\Vert,\Vert\mathbf{X}\Vert_{\gamma,[0,T]})\big),
	\end{align*}
	for all \( z_0, z_1 \in \mathbb{R}^m \).
\end{corollary}	
\begin{proof}
	Note that
	\begin{align*}
		\phi_{\mathbf{X}}^{t}(z_1)-\phi_{\mathbf{X}}^{t}(z_0)=\int_{0}^{1}D_{z_0+\theta(z_1-z_0)}\phi^{t}_{\mathbf{X}}[z_1-z_0] \, \mathrm{d}\theta.
	\end{align*}
	Now, it suffices to apply Proposition~\ref{DDSD}.
\end{proof}
\begin{remark}
	Let $\big({\phi}_{\mathbf{X}}(s,u,z_0)\big)_{u\geq s}$ solve equation
	\begin{align*}
		\mathrm{d}{\phi}_{\mathbf{X}}(s,t,z_0) = V\big({\phi}_{\mathbf{X}}(s,t,z_0)\big) \,  \mathrm{d} \mathbf{X}_t+V_{0}({\phi}_{\mathbf{X}}(s,t,z_0))\, \mathrm{d}t, \ \ \  {\phi}_{\mathbf{X}}(s,s,z_0)=z_0\in \mathbb{R}^m.
	\end{align*}
	In particular, ${\phi}_{\mathbf{X}}(0,t,z_0)={\phi_{\mathbf{X}}^t(z_0)}$. Also for every $0 \leq 
	s <T$, the bound which we obtained in Proposition \ref{RS17} holds for $\vertiii{{\phi}_{\mathbf{X}}(s,\cdot,z_0)}_{\mathcal{D}^{\gamma}_{\mathbf{X}}([s, T])}$. In addition, we have the semiflow property, i.e.  
	\begin{align*}
		0\leq s\leq u\leq t:\ \     {\phi}_{\mathbf{X}}(s,t,z_0)={\phi}_{\mathbf{X}}(s,u,{\phi}_{\mathbf{X}}(u,t,z_0)).
	\end{align*}
	Assume further that \( \big(\psi_{\mathbf{X}}(s,u,z_0)\big)_{u \geq s} \) solves
	\begin{align*}
		&\mathrm{d}{\psi}_{\mathbf{X}}(s,t,z_0)[\bar{z}]\\&\quad=D_{{\phi}_{\mathbf{X}}(s,t,z_0)}V \big({\psi}_{\mathbf{X}}(s,t,z_0)[\bar{z}]\big) \, \mathrm{d}\mathbf{X}_t+D_{{\phi}_{\mathbf{X}}(s,t,z_0)}V_0 \big({\psi}_{\mathbf{X}}(s,t,z_0)[\bar{z}]\big)\, \mathrm{d}t, \ \ \ {\psi}_{\mathbf{X}}(s,s,z_0)[\bar{z}]=\Bar{z}\in \mathbb{R}^m.
	\end{align*}
	It is clear that \(\psi_{\mathbf{X}}(0,t,z_0) = D_{z_0} \phi_{\mathbf{X}}^{t}\), and the semiflow property holds for \(\psi_{\mathbf{X}}\).
\end{remark}
Thanks to the semiflow property, we can conclude that the map \( D_{z_0} \phi_{\mathbf{X}}^{t} : \mathbb{R}^m \to \mathbb{R}^m \) is invertible for every \( t \geq 0 \) and \( z_0 \in \mathbb{R}^m \). For future purposes, we also need to obtain a similar bound as in \eqref{PPOL} for
\[
\sup_{0 \leq t \leq T} \Vert (D_{z_0} \phi_{\mathbf{X}}^t)^{-1} \Vert_{\mathcal{L}(\mathbb{R}^m, \mathbb{R}^m)}.
\]
In the next lemma, we sketch how this bound can be obtained.	

\begin{lemma}\label{LLMMNNBB}
	Assume that the conditions of Proposition~\ref{DDSD} hold. Then, for the same polynomial \( Q_1 \),  
	\begin{align*}
		\sup_{0\leq t\leq T}\Vert{(D_{z_0}\phi_{\mathbf{X}}^{t}})^{-1}\Vert_{\mathcal{L}(\mathbb{R}^m,\mathbb{R}^m)}\leq \exp\big(Q_{1}(\Vert z_0\Vert,\Vert\mathbf{X}\Vert_{\gamma,[0,T]})\big).
	\end{align*}
\end{lemma}
	\begin{proof}
		Let us fix \( 0 < t_0 \leq T \) and define, for \( 0 \leq s, t \leq t_0 \),
		\begin{align*}
			\tilde{X}_{t} := X_{t_0 - t}, \quad 
			\tilde{\mathbb{X}}^2_{s,t} := -\mathbb{X}^2_{t_0 - t, t_0 - s}, \quad 
			\tilde{\mathbb{X}}^3_{s,t} := -\mathbb{X}^3_{t_0 - t, t_0 - s}.
		\end{align*}
		Then, the following algebraic identities hold for all \( 0 \leq s, u, t \leq t_0 \):
		\begin{align}\label{APOSOAa}
			\begin{split}
				\delta\tilde{X}_{s,t} &= \delta\tilde{X}_{s,u} + \delta\tilde{X}_{u,t}, \\
				\tilde{\mathbb{X}}_{s,t}^{2} &= \tilde{\mathbb{X}}_{s,u}^{2} + \tilde{\mathbb{X}}_{u,t}^{2} - \delta \tilde{X}_{u,t} \otimes \delta \tilde{X}_{s,u}, \\
				\tilde{\mathbb{X}}_{s,t}^{3} &= \tilde{\mathbb{X}}_{s,u}^{3} + \tilde{\mathbb{X}}_{u,t}^{3} - \tilde{\mathbb{X}}_{s,u}^{2} \otimes \delta \tilde{X}_{u,t} - \delta \tilde{X}_{u,t} \otimes \tilde{\mathbb{X}}_{s,u}^{2}.
			\end{split}
		\end{align}
		Assume that \( W \) is a finite-dimensional Banach space and let \( [a,b] \subseteq [0,T] \) with \( t_0 \geq b \). We say that \( (\tilde{Y}, \tilde{Y}^{(1)}, \tilde{Y}^{(2)}) \in \tilde{\mathcal{D}}^{\gamma}_{\tilde{\mathbf{X}}, W}([a,b]) \) if \( \tilde{Y} \), \( \tilde{Y}^{(1)} \), and \( \tilde{Y}^{(2)} \) are \( \gamma \)-Hölder continuous paths taking values in \( W \), \( \mathcal{L}(\mathbb{R}^d, W) \), and \( \mathcal{L}(\mathbb{R}^d \otimes \mathbb{R}^d, W) \), respectively, and satisfy:
		\begin{align}\label{expansion1}
			\begin{split}
				\delta \tilde{Y}_{s,t} - \tilde{Y}^{(1)}_t \delta\tilde{X}_{s,t} - \tilde{Y}^{(2)}_t \tilde{\mathbb{X}}^2_{s,t} 
				&= \tilde{Y}^{\#}_{s,t} = \mathcal{O}(|t-s|^{3\gamma}), \\
				\delta \tilde{Y}^{(1)}_{s,t} - \tilde{Y}^{(2)}_t \delta \tilde{X}_{s,t} 
				&= (\tilde{Y}^{(1)})^{\#}_{s,t} = \mathcal{O}(|t-s|^{2\gamma}),
			\end{split}
		\end{align}
		for all \( s, t \in [a,b] \), with \( s \leq t \).
		We claim that the following expression is well defined:
		\begin{align}\label{AOsasww}
			\int_{a}^{b}\tilde{Y}_{\tau}\mathrm{d}\tilde{\mathbf{X}}_\tau :=  \lim_{\substack{|\pi|\rightarrow 0,\\ \pi=\lbrace a = \tau_0 < \tau_{1} < \ldots < \tau_{k}=b \rbrace}}\sum_{0\leq j<k}\ \underbrace{\big[\tilde{Y}_{\tau_{j+1}} \delta\tilde{X}_{\tau_{j},\tau_{j+1}} + \tilde{Y}^{(1)}_{\tau_{j+1}} \tilde{\mathbb{X}}^2_{\tau_{j},\tau_{j+1}} + \tilde{Y}^{(2)}_{\tau_{j+1}} \tilde{\mathbb{X}}^3_{\tau_{j},\tau_{j+1}}\big]}_{\Xi_{\tau_j,\tau_{j+1}}}.
		\end{align}
		To see this, note that the identities in \eqref{APOSOAa}, combined with the expansion in \eqref{expansion1}, yield that for every \( a \leq \tau_1 \leq \tau_2 \leq \tau_3 \leq b \),
		\begin{align*}
			\Xi_{\tau_1,\tau_3} - \Xi_{\tau_1,\tau_2} - \Xi_{\tau_2,\tau_3}
			&= \tilde{Y}^{\#}_{\tau_2,\tau_3} \, \delta\tilde{X}_{\tau_1,\tau_2}
			+ (\tilde{Y}^{(1)})^{\#}_{\tau_2,\tau_3} \, \tilde{\mathbb{X}}^{2}_{\tau_1,\tau_2}
			+ \delta\tilde{Y}^{(2)}_{\tau_2,\tau_3} \, \tilde{\mathbb{X}}^{3}_{\tau_1,\tau_2} \\
			&= \mathcal{O}(|b-a|^{4\gamma}).
		\end{align*}
		Since \( 4\gamma > 1 \), the Sewing Lemma (cf.\ \cite{FH20}[Lemma 4.2]) implies that the definition in \eqref{AOsasww} is indeed well defined.
		It is now straightforward to see that, with slight reformulation, results analogous to Lemma~\ref{composit} and Lemma~\ref{expansion_2} can be established in the space \( \tilde{\mathcal{D}}^{\gamma}_{\tilde{\mathbf{X}}, W}([a,b]) \). Let \( (Y, Y^{(1)}, Y^{(2)}) \in \mathcal{D}^{\gamma}_{\mathbf{X}, W}([a,b]) \), and define
		\begin{align*}
			(\tilde{Y}, \tilde{Y}^{(1)}, \tilde{Y}^{(2)}) := (Y_{t_0 - \cdot}, Y^{(1)}_{t_0 - \cdot}, Y^{(2)}_{t_0 - \cdot}).
		\end{align*}
		Then, from \eqref{expansion1}, it follows immediately that
		\[
		(\tilde{Y}, \tilde{Y}^{(1)}, \tilde{Y}^{(2)}) \in \tilde{\mathcal{D}}^{\gamma}_{\tilde{\mathbf{X}}, W}([a,b]).
		\]
		Moreover, from \eqref{AOsasww}, we obtain
		\begin{align}\label{NMNMN}
			\int_{a}^{b} \tilde{Y}_{\tau} \mathrm{d}\tilde{\mathbf{X}}_\tau = -\int_{t_0 - b}^{t_0 - a} Y_{\tau} \mathrm{d}\mathbf{X}_\tau.
		\end{align}
		Based on these explanations, for all \( 0 \leq t \leq t_0 \), we set
		\[
		\big(\tilde{\phi}_{\tilde{\mathbf{X}}}^t(z_0),\tilde{\phi}_{\tilde{\mathbf{X}}}^t(z_0)^{(1)},\tilde{\phi}_{\tilde{\mathbf{X}}}^t(z_0)^{(2)}\big)=\big(\phi_{\mathbf{X}}^{t_0-t}(z_0),\phi_{\mathbf{X}}^{t_0-t}(z_0)^{(1)},\phi_{\mathbf{X}}^{t_0-t}(z_0)^{(2)}\big).
		\]
		Let \( (\tilde{\psi}_{\tilde{\mathbf{X}}}^{u})_{0 \leq u \leq t_0} \) be the solution to the equation
		\begin{align}\label{BNBBBB}
			&\mathrm{d}\tilde{\psi}_{\tilde{\mathbf{X}}}^{t}[\bar{z}]=D_{\tilde{\phi}_{\tilde{\mathbf{X}}}^t(z_0)}V \big(\tilde{\psi}_{\tilde{\mathbf{X}}}^{t}[\bar{z}]\big)\mathrm{d}\tilde{\mathbf{X}}_t-D_{\tilde{\phi}_{\tilde{\mathbf{X}}}^t(z_0)}V_0 \big(\tilde{\psi}_{\tilde{\mathbf{X}}}^{t}[\bar{z}]\big)\mathrm{d}t, \ \ \  \tilde{\psi}_{\tilde{\mathbf{X}}}^{0}[\bar{z}]=\Bar{z}\in \mathbb{R}^m.
		\end{align}
		We claim that \( \tilde{\psi}_{\tilde{\mathbf{X}}}^{t_0}[z] = (D_{z_0} \phi_{\mathbf{X}}^{t_0})^{-1}[z] \). To prove this, observe that from \eqref{Linearized}, for \( s, t \in [0, t_0] \) with \( s < t \) and for any \( z \in \mathbb{R}^m \),
		\begin{align*}
			\begin{split}
				&D_{z_0}\phi^{t_0-s}_{\mathbf{X}}[{z}]-D_{z_0}\phi^{t_0-t}_{\mathbf{X}}[z]\\ &\quad= \int_{t_0-t}^{t_0-s}D_{\phi^{\tau}_{\mathbf{X}}(z_0)}V\big( D_{z_0}\phi^{\tau}_{\mathbf{X}}[z]\big)\mathrm{d}\mathbf{X}_\tau +\int_{t_0-t}^{t_0-s} D_{\phi^{\tau}_{\mathbf{X}}(z_0)}V_0\big( D_{z_0}\phi^{\tau}_{\mathbf{X}}[z]\big) \, \mathrm{d}\tau .
			\end{split}
		\end{align*}
		From \eqref{NMNMN}
		\begin{align*}
			\begin{split}
				&D_{z_0}\phi^{t_0-t}_{\mathbf{X}}[{z}]-D_{z_0}\phi^{t_0-s}_{\mathbf{X}}[z]\\ &\quad= \int_{s}^{t}D_{\tilde{\phi}_{\tilde{\mathbf{X}}}^\tau(z_0)}V\big(D_{z_0}\phi^{t_0-\tau}_{\mathbf{X}}[z]\big)\mathrm{d}\tilde{\mathbf{X}}_\tau -\int_{s}^{t} D_{\tilde{\phi}_{\tilde{\mathbf{X}}}^\tau(z_0)}V_0\big(D_{z_0}\phi^{t_0-\tau}_{\mathbf{X}}[z]\big) \, \mathrm{d}\tau .
			\end{split}
		\end{align*}
		Therefore, \( D_{z_0} \phi^{t_0 - u}_{\mathbf{X}}[z] \) solves equation~\eqref{BNBBBB} on the interval \( [0, t_0] \) with initial condition \( \bar{z} = D_{z_0} \phi^{t_0}_{\mathbf{X}}[z] \). Thus, for all \( 0 \leq u \leq t_0 \),
		\begin{align*}
			\tilde{\psi}_{\tilde{\mathbf{X}}}^{u}\big[D_{z_0} \phi^{t_0}_{\mathbf{X}}[z]\big] = D_{z_0} \phi^{t_0-u}_{\mathbf{X}}[z].
		\end{align*}
		In particular, for \( u = t_0 \), we conclude that
		\[
		\tilde{\psi}_{\tilde{\mathbf{X}}}^{t_0}\big[D_{z_0} \phi^{t_0}_{\mathbf{X}}[z]\big] = z.
		\]
		Finally, we can proceed as in Proposition~\ref{DDSD}, using equation~\eqref{BNBBBB}, to show that
		\begin{align*}
			\sup_{0\leq t\leq T}\Vert{(D_{z_0}\phi_{\mathbf{X}}^t})^{-1}\Vert_{\mathcal{L}(\mathbb{R}^m,\mathbb{R}^m)}\leq \exp\big(Q_{1}(\Vert z_0\Vert,\Vert\mathbf{X}\Vert_{\gamma,[0,T]})\big).
		\end{align*}
	\end{proof}
We need an additional estimate.
\begin{proposition}\label{ZXZ}
	Let the conditions of Proposition~\ref{DDSD} hold. Also, suppose that for some \( 0 < r_1 \leq 1 \), there exists a polynomial \( P_2 \) such that
	\begin{align}\label{42}
		\forall z_1, z_0 \in \mathbb{R}^m: \quad \Vert D_{z_1} V_0 - D_{z_0} V_0 \Vert \leq P_2(\Vert z_1 \Vert, \Vert z_0 \Vert) \Vert z_1 - z_0 \Vert^{r_1}.
	\end{align}
	Fix \( \kappa \in \left( \frac{2}{3}, 1 \right) \). Then, there exists a polynomial \( Q_3 \) such that for \( \gamma \neq \frac{1}{3} \),
	\begin{align*}
		\forall z_1, z_0, \bar{z} \in \mathbb{R}^m \quad 
		&\vertiii{D_{z_1} \phi_{\mathbf{X}}[\bar{z}] - D_{z_0} \phi_{\mathbf{X}}[\bar{z}]}_{\mathcal{D}^{\gamma}_{\mathbf{X}}([0, T])} \\
		&\leq \max \left\{ \Vert z_1 - z_0 \Vert, \Vert z_1 - z_0 \Vert^{r_1}, \Vert z_1 - z_0 \Vert^{p-4}, \Vert z_1 - z_0 \Vert^{1 - \kappa} \right\} \\
		&\quad \times \exp\left( Q_3\left( \Vert z_1 \Vert, \Vert z_0 \Vert, \Vert \mathbf{X} \Vert_{\gamma, [0, T]} \right) \right) \Vert \bar{z} \Vert.
	\end{align*}
	\begin{proof}
		Recall that for \( \frac{1}{\gamma} + 1 < p < 5 \), the vector field \( V \) is of class \( \mathrm{Lip}^{p} \).  
		Define
		\[
		A(t, \mathbf{X}, z_1, z_0, \bar{z}) := D_{z_1} \phi^{t}_{\mathbf{X}}[\bar{z}] - D_{z_0} \phi^{t}_{\mathbf{X}}[\bar{z}].
		\]
		Then,
		\begin{align}\label{UJMNB}
			\begin{split}
				&A(t,\mathbf{X},z_1,z_0,\Bar{z})-A(s,\mathbf{X},z_1,z_0,\Bar{z})=\int_{s}^{t}D_{\phi^{\tau}_{\mathbf{X}}(z_1)}V\big( A(\tau,\mathbf{X},z_1,z_0,\Bar{z})\big)\mathrm{d}\mathbf{X}_\tau\\ &\quad + \int_{s}^{t}D_{\phi^{\tau}_{\mathbf{X}}(z_1)}V_0\big( A(\tau,\mathbf{X},z_1,z_0,\Bar{z})\big)\mathrm{d}\tau+
				\int_{s}^{t}\big(D_{\phi^{\tau}_{\mathbf{X}}(z_1)}V-D_{\phi^{\tau}_{\mathbf{X}}(z_0)}V \big)\big(  D_{z_0}\phi^{\tau}_{\mathbf{X}}[\Bar{z}]\big)\mathrm{d}\mathbf{X}_\tau\\ &\qquad+ \int_{s}^{t}\big(D_{\phi^{\tau}_{\mathbf{X}}(z_1)}V_0-D_{\phi^{\tau}_{\mathbf{X}}(z_0)}V_0 \big)  D_{z_0}\phi^{\tau}_{\mathbf{X}}[\Bar{z}]\mathrm{d}\tau.
			\end{split}
		\end{align}
		Therefore, from \eqref{bNNNMM},
		\begin{align}\label{NM49}
			\begin{split}
				&\vertiii{D_{\phi_{\mathbf{X}}(z_1)}V-D_{\phi_{\mathbf{X}}(z_0)}V}_{\mathcal{D}^{\kappa\gamma}_{\mathbf{X}}([0, T])}\\&\lesssim  \max\big\lbrace \vertiii{\phi_{\mathbf{X}}(z_1)-\phi_{\mathbf{X}}(z_0)}_{\mathcal{D}^{\gamma}_{\mathbf{X}}([0, T])},\vertiii{\phi_{\mathbf{X}}(z_1)-\phi_{\mathbf{X}}(z_0)}_{\mathcal{D}^{\gamma}_{\mathbf{X}}([0, T])}^{p-4},\vertiii{\phi_{\mathbf{X}}(z_1)-\phi_{\mathbf{X}}(z_0)}_{\mathcal{D}^{\gamma}_{\mathbf{X}}([0, T])}^{1-\kappa}\big\rbrace\\ &\times\big(1+\Vert\mathbf{X}\Vert_{\gamma,[0,T]}^9+ \vertiii{\phi_{\mathbf{X}}(z_1)}_{\mathcal{D}^{\gamma}_{\mathbf{X}}([0, T])}^9+\vertiii{\phi_{\mathbf{X}}(z_0)}_{\mathcal{D}^{\gamma}_{\mathbf{X}}([0, T])}^9\big) .
			\end{split}
		\end{align}
		Thus, by using \eqref{CDCa} and \eqref{NMUIO_1}, with \( \gamma_1 = \kappa \gamma \), we obtain
		\begin{align}\label{II0}
			\begin{split}
				[s,t]\subseteq[0,T]:\ \ &\vertiii{\int_{s}\big(D_{\phi^{\tau}_{\mathbf{X}}(z_1)}V-D_{\phi^{\tau}_{\mathbf{X}}(z_0)}V \big)\big(  D_{z_0}\phi^{\tau}_{\mathbf{X}}[\Bar{z}]\big)\mathrm{d}\mathbf{X}_\tau}_{\mathcal{D}^{\gamma}_{\mathbf{X}}([s,t])}\\
				&\lesssim (1+\Vert \mathbf{X}\Vert_{\gamma,[0,T]}^4)^2\vertiii{D_{\phi_{\mathbf{X}}(z_1)}V-D_{\phi_{\mathbf{X}}(z_0)}V}_{\mathcal{D}^{\kappa\gamma}_{\mathbf{X}}([0, T])}\vertiii{D_{z_0}\phi^{\tau}_{\mathbf{X}}[\Bar{z}]}_{\mathcal{D}^{\kappa\gamma}_{\mathbf{X}}([0, T])}\\	&\lesssim (1+\Vert \mathbf{X}\Vert_{\gamma,[0,T]}^4)^2\vertiii{D_{\phi_{\mathbf{X}}(z_1)}V-D_{\phi_{\mathbf{X}}(z_0)}V}_{\mathcal{D}^{\kappa\gamma}_{\mathbf{X}}([0, T])}\vertiii{D_{z_0}\phi^{\tau}_{\mathbf{X}}[\Bar{z}]}_{\mathcal{D}^{\gamma}_{\mathbf{X}}([0, T])}.
			\end{split}
		\end{align}
		Hence, by combining \eqref{NM49}, \eqref{II0}, Proposition~\ref{DDSD}, and Corollary~\ref{difference}, we deduce the existence of a polynomial \( Q_6 \) such that
		\begin{align}\label{II1}
			\begin{split}
				&[s,t]\subseteq [0,T]:\ \ \vertiii{\int_{s}\big(D_{\phi^{\tau}_{\mathbf{X}}(z_1)}V-D_{\phi^{\tau}_{\mathbf{X}}(z_0)}V \big)\big(  D_{z_0}\phi^{\tau}_{\mathbf{X}}[\Bar{z}]\big)\mathrm{d}\mathbf{X}_\tau}_{\mathcal{D}^{\gamma}_{\mathbf{X}}([s,t])}\\\quad &\leq\max\lbrace \Vert z_1-z_0\Vert,\Vert z_1-z_0\Vert^{p-4},\Vert z_1-z_0\Vert^{1-\kappa}\rbrace \exp\big(Q_6(\Vert z_1\Vert,\Vert z_0\Vert,\Vert\mathbf{X}\Vert_{\gamma,[0,T]})\big)\Vert\Bar{z}\Vert.
			\end{split}
		\end{align}
		Under the assumption imposed on \( V_0 \) in \eqref{42},
		\begin{align*}
			&[s,t]\subseteq [0,T]:\ \ \vertiii{\big(\int_{s}\big(D_{\phi^{\tau}_{\mathbf{X}}(z_1)}V_0-D_{\phi^{\tau}_{\mathbf{X}}(z_0)}V_0 \big)  D_{z_0}\phi^{\tau}_{\mathbf{X}}[\Bar{z}]\mathrm{d}\tau,0,0\big)}_{\mathcal{D}^{\gamma}_{\mathbf{X}}([s,t])}\\&\quad\lesssim (t-s)^{1-3\gamma}P_{2}\big(\vertiii{\phi_{\mathbf{X}}(z_1)}_{\mathcal{D}^{\gamma}_{\mathbf{X}}([0,T])},\vertiii{\phi_{\mathbf{X}}(z_0)}_{\mathcal{D}^{\gamma}_{\mathbf{X}}([0,T])}\big)\\&\quad \quad \times \vertiii{D_{z_0}\phi^{\tau}_{\mathbf{X}}[\Bar{z}]}_{\mathcal{D}^{\gamma}_{\mathbf{X}}([0, T])}\vertiii{\phi_{\mathbf{X}}(z_1)-\phi_{\mathbf{X}}(z_0)}_{\mathcal{D}^{\gamma}_{\mathbf{X}}([s,t])}^{r_1}.
		\end{align*}
		Consequently, from Proposition~\ref{DDSD} and Corollary~\ref{difference}, we can find a polynomial \( Q_7 \) such that
		\begin{align}\label{II2}
			\begin{split}
				&[s,t]\subseteq [0,T]:\ \ \vertiii{\big(\int_{s}\big(D_{\phi^{\tau}_{\mathbf{X}}(z_1)}V_0-D_{\phi^{\tau}_{\mathbf{X}}(z_0)}V_0 \big)  D_{z_0}\phi^{\tau}_{\mathbf{X}}[\Bar{z}]\mathrm{d}\tau,0,0\big)}_{\mathcal{D}^{\gamma}_{\mathbf{X}}([s,t])}\\&\quad\leq\Vert z_1-z_0\Vert^{r_1} \exp\big(Q_7(\Vert z_1\Vert,\Vert z_0\Vert,\Vert\mathbf{X}\Vert_{\gamma,[0,T]})\big) \Vert\Bar{z}\Vert.
			\end{split}
		\end{align}
		In summary, from \eqref{II1} and \eqref{II2}, we obtain			\begin{align}\label{II5}
			\begin{split}
				&\max\bigg\lbrace \vertiii{\int_{s}\big(D_{\phi^{\tau}_{\mathbf{X}}(z_1)}V-D_{\phi^{\tau}_{\mathbf{X}}(z_0)}V \big)\big(  D_{z_0}\phi^{\tau}_{\mathbf{X}}[\Bar{z}]\big)\mathrm{d}\mathbf{X}_\tau}_{\mathcal{D}^{\gamma}_{\mathbf{X}}([s,t])},\\&\myquad[3]\vertiii{\big(\int_{s}\big(D_{\phi^{\tau}_{\mathbf{X}}(z_1)}V_0-D_{\phi^{\tau}_{\mathbf{X}}(z_0)}V_0 \big)  D_{z_0}\phi^{\tau}_{\mathbf{X}}[\Bar{z}]\mathrm{d}\tau,0,0\big)}_{\mathcal{D}^{\gamma}_{\mathbf{X}}([s,t])}\bigg\rbrace
				\\ &\quad\leq \max\lbrace \Vert z_1-z_0\Vert,\Vert z_1-z_0\Vert^{r_1},\Vert z_1-z_0\Vert^{p-4},\Vert z_1-z_0\Vert^{1-\kappa}\rbrace \exp\big(Q_{8}(\Vert z_1\Vert,\Vert z_0\Vert,\Vert\mathbf{X}\Vert_{\gamma,[0,T]})\big)\Vert\Bar{z}\Vert,
			\end{split}
		\end{align}
		where \( Q_8 \) is a polynomial.
		From the assumption in~\eqref{41}, and similarly to~\eqref{DDrift},
		\begin{align}\label{II3}
			\begin{split}
				&[s,t]\subseteq [0,T]:\ \ \  \vertiii{\big(\int_{s}D_{\phi^{\tau}_{\mathbf{X}}(z_1)}V_0\big( A(\tau,\mathbf{X},z_1,z_0,\Bar{z})\big)\mathrm{d}\tau,0,0\big)}_{\mathcal{D}^{\gamma}_{\mathbf{X}}([s,t])}\\&\quad\lesssim 
				(t-s)^{1-3\gamma}P_{1}(\vertiii{\phi_{\mathbf{X}}(z_1)}_{\mathcal{D}^{\gamma}_{\mathbf{X}}([0,T])}) \vertiii{A(,\mathbf{X},z_1,z_0,\Bar{z})}_{\mathcal{D}^{\gamma}_{\mathbf{X}}([s,t])}.
			\end{split}
		\end{align}
		Analogous to \eqref{Sewing lemma}, we obtain the following estimate for a polynomial \( Q_9 \):
		\begin{align}\label{II4}
			\begin{split}
				&\vertiii{\int_{s}D_{\phi^{\tau}_{\mathbf{X}}(z_1)}V\big( A(\tau,\mathbf{X},z_1,z_0,\Bar{z})\big)\mathrm{d}\mathbf{X}_\tau}_{\mathcal{D}^{\gamma}_{\mathbf{X}}([s,t])}\lesssim (1+\Vert\mathbf{X}\Vert_{\gamma,[s,t]}^3)\Vert A(s,\mathbf{X},z_1,z_0,\Bar{z})\Vert\\&\quad+(t-s)^\gamma Q_{9}( \Vert\mathbf{X}\Vert_{\gamma,[0,T]},\vertiii{\phi_{\mathbf{X}}(z_1)}_{\mathcal{D}^{\gamma}_{\mathbf{X}}([0,T])}\Vert)\vertiii{A(,\mathbf{X},z_1,z_0,\Bar{z})}_{\mathcal{D}^{\gamma}_{\mathbf{X}}([s,t])}.
			\end{split}
		\end{align}
		We are now ready to derive our estimate. Assume that \( [s, t] \subseteq [0, T] \) with \( t - s \leq 1 \). From Proposition~\ref{RS17}, together with \eqref{II5} and \eqref{II4}, we conclude that there exists a constant \( M \geq 1 \) such that
		\begin{align}\label{II6}
			\begin{split}
				&\vertiii{A(,\mathbf{X},z_1,z_0,\Bar{z})}_{\mathcal{D}^{\gamma}_{\mathbf{X}}([s,t])}\leq M(1+\Vert\mathbf{X}\Vert_{\gamma,[0,T]}^3)\Vert A(s,\mathbf{X},z_1,z_0,\Bar{z})\Vert\\ & M\underbrace{\max\lbrace \Vert z_1-z_0\Vert,\Vert z_1-z_0\Vert^{r_1},\Vert z_1-z_0\Vert^{p-4},\Vert z_1-z_0\Vert^{1-\kappa}\rbrace \exp\big(Q_{8}(\Vert z_1\Vert,\Vert z_0\Vert,\Vert\mathbf{X}\Vert_{\gamma,[0,T]})\big)}_{B(\Vert z_1\Vert,\Vert z_0\Vert,\Vert\mathbf{X}\Vert_{\gamma,[0,T]})}\Vert\Bar{z}\Vert\\&+M(t-s)^{1-3\gamma}\bigg[Q_{9}( \Vert\mathbf{X}\Vert_{\gamma,[0,T]},\vertiii{\phi_{\mathbf{X}}(z_1)}_{\mathcal{D}^{\gamma}_{\mathbf{X}}([0,T])}\Vert)+P_{1}(\vertiii{\phi_{\mathbf{X}}(z_1)}_{\mathcal{D}^{\gamma}_{\mathbf{X}}([0,T])})\bigg]\times\\&\myquad[10]\vertiii{A(,\mathbf{X},z_1,z_0,\Bar{z})}_{\mathcal{D}^{\gamma}_{\mathbf{X}}([s,t])}.
			\end{split}
		\end{align}
		We now proceed as before. For \( 0 < \epsilon < 1 \), set
		\begin{align*}
			M\tau^{1-3\gamma} \bigg[Q_{9}( \Vert\mathbf{X}\Vert_{\gamma,[0,T]},\vertiii{\phi_{\mathbf{X}}(z_1)}_{\mathcal{D}^{\gamma}_{\mathbf{X}}([0,T])}\Vert)+P_{1}(\vertiii{\phi_{\mathbf{X}}(z_1)}_{\mathcal{D}^{\gamma}_{\mathbf{X}}([0,T])})\bigg]=1-\epsilon.
		\end{align*}
		We define a finite sequence $(\tau_{n})_{0\leq n\leq \bar{N}(\epsilon,\mathbf{X},z_1,z_0)}$ in \( [0, T] \), such that $\tau_0=0$ and $\tau_{\bar{N}(\epsilon,\mathbf{X},z_1,z_0)}=T$. For \( 0 \leq n < \bar{N}(\epsilon, \mathbf{X}, z_1, z_0) \), we impose \( \tau_{n+1} - \tau_n = \tau \). Therefore,
		\begin{align}\label{YUBN}
			\begin{split}
				&\bar{N}(\epsilon,\mathbf{X},z_1,z_0)=\lfloor\frac{T}{\tau}\rfloor+1\\ &\quad= \bigg\lfloor T\big(\frac{M\big(Q_{9}( \Vert\mathbf{X}\Vert_{\gamma,[0,T]},\vertiii{\phi_{\mathbf{X}}(z_1)}_{\mathcal{D}^{\gamma}_{\mathbf{X}}([0,T])}\Vert)+P_{1}(\vertiii{\phi_{\mathbf{X}}(z_1)}_{\mathcal{D}^{\gamma}_{\mathbf{X}}([0,T])})\big)}{1-\epsilon}\big)^{\frac{1}{1-3\gamma}}\bigg\rfloor +1.
			\end{split}
		\end{align}
		Set \( M_\epsilon = \frac{M}{\epsilon} \). Then, from \eqref{II6}, we have
		\begin{align*}
			0\leq n< \bar{N}(\epsilon,\mathbf{X},z_1,z_0):\ \ &\vertiii{A(,\mathbf{X},z_1,z_0,\Bar{z})}_{\mathcal{D}^{\gamma}_{\mathbf{X}}([\tau_n,\tau_{n+1}])}\leq M_\epsilon (1+\Vert\mathbf{X}\Vert_{\gamma,[0,T]}^3)\Vert A(\tau_n,\mathbf{X},z_1,z_0,\Bar{z})\Vert\\
			&\quad +M_\epsilon B(\Vert z_1\Vert,\Vert z_0\Vert,\Vert\mathbf{X}\Vert_{\gamma,[0,T]})\Vert\Bar{z}\Vert.
		\end{align*}
		In particular, since \( A(0, \mathbf{X}, z_1, z_0, \Bar{z}) = 0 \),
		\begin{align*}
			&\sup_{0\leq n< \bar{N}(\epsilon,\mathbf{X},z_1,z_0)} \vertiii{A(,\mathbf{X},z_1,z_0,\Bar{z})}_{\mathcal{D}^{\gamma}_{\mathbf{X}}([\tau_n,\tau_{n+1}])}\\&\quad \leq \sum_{0\leq n\leq \bar{N}(\epsilon,\mathbf{X},z_1,z_0)}\big(M_\epsilon (1+\Vert\mathbf{X}\Vert_{\gamma,[0,T]}^3)\big)^{n}M_\epsilon B(\Vert z_1\Vert,\Vert z_0\Vert,\Vert\mathbf{X}\Vert_{\gamma,[0,T]})\Vert\Bar{z}\Vert\\
			&\quad \lesssim \big(M_\epsilon (1+\Vert\mathbf{X}\Vert_{\gamma,[0,T]}^3)\big)^{\bar{N}(\epsilon,\mathbf{X},z_1,z_0)+1}B(\Vert z_1\Vert,\Vert z_0\Vert,\Vert\mathbf{X}\Vert_{\gamma,[0,T]})\Vert\Bar{z}\Vert.
		\end{align*}
		Recall from \eqref{DDFFD},
		\begin{align*}
			&\vertiii{A(,\mathbf{X},z_1,z_0,\Bar{z})}_{\mathcal{D}^{\gamma}_{\mathbf{X}}([\tau_n,T]}\leq\\ \quad& (\Vert\mathbf{X}\Vert_{\gamma,[0,T]}+1)^2\vertiii{A(,\mathbf{X},z_1,z_0,\Bar{z})}_{\mathcal{D}^{\gamma}_{\mathbf{X}}([\tau_n,\tau_{n+1}]}+\vertiii{A(,\mathbf{X},z_1,z_0,\Bar{z})}_{\mathcal{D}^{\gamma}_{\mathbf{X}}([\tau_{n+1},T]}.
		\end{align*}
		Consequently,
		\begin{align}\label{IN2}
			\begin{split}
				&\vertiii{A(,\mathbf{X},z_1,z_0,\Bar{z})}_{\mathcal{D}^{\gamma}_{\mathbf{X}}([0,T])}\lesssim (\bar{N}(\epsilon,\mathbf{X},z_1,z_0)+1)(\Vert\mathbf{X}\Vert_{\gamma,[0,T]}+1)^2\\ &\times\big(M_{\epsilon}(1+\Vert\mathbf{X}\Vert_{\gamma,[s,t]}^3)\big)^{\tilde{N}(\epsilon,\mathbf{X},z_0)+1} B(\Vert z_1\Vert,\Vert z_0\Vert,\Vert\mathbf{X}\Vert_{\gamma,[0,T]})\Vert\Bar{z}\Vert.
			\end{split}
		\end{align}
		From \eqref{BNNMM} and \eqref{YUBN}, it follows that $\bar{N}(\epsilon,\mathbf{X},z_1,z_0)$ exhibits polynomial growth with respect to $\Vert\mathbf{X}\Vert_{\gamma,[0,T]}$, $\Vert z_0\Vert$, and $\Vert z_1\Vert$. Consequently, applying inequality \eqref{IN2} establishes the desired result.
	\end{proof}
\end{proposition}
\begin{remark}
	In Proposition \ref{ZXZ}, we excluded the case $\gamma = \frac{1}{3}$ for technical reasons. However, if $\gamma = \frac{1}{3}$, one can simply work with any $\gamma'$ such that $\frac{1}{4} < \gamma' < \gamma$.
\end{remark}
	\begin{remark}   
		The implication of Proposition~\ref{ZXZ} is that the map $z_0 \mapsto D_{z_0} \phi_{\mathbf{X}}[\bar{z}]$ is H\"older continuous with respect to $z_0$. Moreover, the proposition yields a bound on the growth of this H\"older continuity, which, locally (in $z_0$), is less than $\exp(Q(\Vert \mathbf{X} \Vert_{\gamma, [0, T]}))$, where $Q$ is a polynomial function. As we will demonstrate in the next section, this result is a central tool in establishing the existence of invariant manifolds. In particular, the local bound it provides is sufficiently small for our analytical purposes. 
	\end{remark}
\section[Invariant manifolds and stability]{Invariant manifolds and stability} \label{manifolds}
In this section, we apply the estimates derived previously to establish the existence of the Lyapunov spectrum. Additionally, we demonstrate the existence of invariant manifolds. As a consequence, we derive pathwise exponential stability in a neighborhood of stationary points, assuming that all Lyapunov exponents are negative. We begin by recalling several fundamental definitions.
\begin{definition}
	Let $(\Omega,\mathcal{F},\mathbb{P})$ be a probability space, and let $\mathbb{T}$ denote either $\mathbb{Z}$ or $\mathbb{R}$. Suppose there exists a family of measurable maps $\{\theta_t\}_{t \in \mathbb{T}}$ on $\Omega$ satisfying the following properties:
	\begin{itemize}
		\item[(i)] $\theta_0 = \operatorname{id}$,
		\item[(ii)] for all $t, s \in \mathbb{T}$, we have $\theta_{t+s} = \theta_t \circ \theta_s$,
		\item[(iii)] if $\mathbb{T} = \mathbb{R}$, then the map $(t, \omega) \mapsto \theta_t \omega$ is $\mathcal{B}(\mathbb{R}) \otimes \mathcal{F} / \mathcal{F}$-measurable,
		\item[(iv)] for every $t \in \mathbb{T}$, we have $\mathbb{P} \circ \theta_t^{-1} = \mathbb{P}$.
	\end{itemize}
	Then the tuple $(\Omega, \mathcal{F}, \{\theta_t\}_{t \in \mathbb{T}}, \mathbb{P})$ is called an \emph{invertible measure-preserving dynamical system}. It is said to be \emph{ergodic} if, for every $t \in \mathbb{T} \setminus \{0\}$, the map $\theta_t : \Omega \to \Omega$ is ergodic.
\end{definition}
Another fundamental concept is the definition of a cocycle.
\begin{definition}\label{IOAOSLq}
	Let \( \mathcal{X} \) be a separable Banach space, and let \( (\Omega, \mathcal{F}, \{\theta_t\}_{t \in \mathbb{T}}, \mathbb{P}) \) be an invertible measure-preserving dynamical system. Let \( \mathbb{T}^{+} \subset \mathbb{T} \) denote the non-negative part of \( \mathbb{T} \). A map
	\begin{align*}
		\phi \colon \mathbb{T}^{+} \times \Omega \times \mathcal{X} \rightarrow \mathcal{X}
	\end{align*}
	is called a \emph{measurable cocycle} if it is jointly measurable and satisfies the cocycle property:
	\begin{align*}
		\forall s, t \in \mathbb{T}^{+},\ s < t: \ \phi(s+t, \omega, x) = \phi(s, \theta_{t} \omega, \phi(t, \omega, x)),
	\end{align*}
	We say that \( \phi \) is a \emph{\( C^k \)-cocycle} if, for every fixed \( (s, \omega) \in \mathbb{T}^{+} \times \Omega \), the map \( \phi(s, \omega, \cdot) \colon \mathcal{X} \rightarrow \mathcal{X} \) is of class \( C^k \). If \( \phi(s, \omega, \cdot) \) is linear for all \( (s, \omega) \), then \( \phi \) is called a \emph{linear cocycle}.
\end{definition}
In rough path theory, differential equations are solved pathwise. Consequently, this framework naturally aligns with the theory of random dynamical systems. We now recall some definitions and results from \cite{BRS17}, where the authors studied solutions of rough differential equations within the context of random dynamical systems.
\begin{definition}\label{BVCXZ}
	Let \( (\Omega, \mathcal{F}, \mathbb{P}, (\theta_t)_{t \in \mathbb{R}}) \) be an (ergodic) measure-preserving dynamical system. Let \( p \geq 1 \) and \( N \in \mathbb{N} \) satisfy \( p - 1 < N \leq p \). 
	
	A process 
	\[
	\mathbf{X} \colon \mathbb{R} \times \Omega \to T^{N}(\mathbb{R}^d) := \mathbb{R} \oplus \mathbb{R}^{d} \oplus \dots \oplus (\mathbb{R}^{d})^{\otimes N}
	\]
	is called a \( p \)-variation \emph{geometric rough path cocycle} if, for all \( \omega \in \Omega \) and every \( s, t \in \mathbb{R} \), the following conditions are satisfied:
	\begin{itemize}
		\item[(i)] \( \mathbf{X}(\omega) \) is a geometric \( p \)-variation rough path.
		\item[(ii)] We have
		\[
		\mathbf{X}_{s+t}(\omega) = \mathbf{X}_{s}(\omega) \otimes \mathbf{X}_{t}(\theta_s \omega).
		\]
		In particular, this implies  
		\[
		\mathbf{X}_{s, s+t}(\omega) = \mathbf{X}_{t}(\theta_s \omega).
		\]
	\end{itemize}
\end{definition}
We now return to the rough differential equation \eqref{SDE}. Throughout this section, we assume \( \frac{1}{4} < \gamma \leq \frac{1}{2} \), with particular focus on the case \( \frac{1}{4} < \gamma \leq \frac{1}{3} \), and that the equation is driven by a geometric \( \gamma \)-rough path cocycle \( \mathbf{X} \). It is known that solutions to such equations satisfy the flow property (see \cite{RS17}). Let \( \phi_{\mathbf{X}(\omega)}^t(x) \) denote the solution at time \( t \geq 0 \) with initial condition \( x \in \mathbb{R}^m \). Then, according to \cite[Theorem 21]{BRS17}, by defining
\[
\varphi_{\omega}^{t}(x) \coloneqq \phi_{\mathbf{X}(\omega)}^{t}(x),
\]
we obtain the cocycle property:
\[
\varphi^{t+s}_{\omega}(x) = \varphi^{t}_{\theta_s\omega} \left( \varphi_{\omega}^{s}(x) \right),
\]
for all \( s, t \geq 0 \) and \( x \in \mathbb{R}^m \).
The following definition generalizes the concept of a fixed point.
\begin{definition}\label{def:stationary_point}
	A random variable \( Y \colon \Omega \rightarrow \mathbb{R}^m \) is called a \emph{stationary point} if:
	\begin{itemize}
		\item[(i)] \( Y \) is measurable, and
		\item[(ii)] for every \( t > 0 \) and \( \omega \in \Omega \), we have \( \varphi^{t}_{\omega}(Y_{\omega}) = Y_{\theta_{t} \omega} \).
	\end{itemize}
\end{definition}

\begin{remark}
	Let \( Y \) be a stationary point. Then the linearized map, defined by
	\[
	\psi^{t}_{\omega} \colon \mathbb{R}^m \to \mathbb{R}^m, \quad
	z \mapsto \psi^{t}_{\omega}(z) \coloneqq D_{Y_{\omega}} \varphi^{t}_{\omega}[z],
	\]
	is a linear cocycle.
	
\end{remark}

For the remainder of this section, we impose the following additional assumption:
\begin{assumption}\label{S_U_C}
	\begin{itemize}
		\item [(i)] $(\Omega,\mathcal{F},\lbrace\theta_t\rbrace_{t\in\mathbb{T}},\P)$ is an invertible measure-preserving dynamical system.
		\item [(ii)] $(\Omega,\mathcal{F},\lbrace\theta_t\rbrace_{t\in\mathbb{T}},\P)$ is ergodic.
		\item[(iii)] For \( \frac{1}{4} < \gamma \leq \frac{1}{3} \), we assume that 
		\[
		\mathbf{X} = (X, \mathbb{X}^2, \mathbb{X}^3) \colon \mathbb{R} \times \Omega \to T^3(\mathbb{R}^d)
		\]
		is a geometric \( \gamma \)-rough path. Therefore, it is also a geometric rough path with finite \( \frac{1}{\gamma} \)-variation.
		\item [(iv)] We assume that for every $T > 0$,
		\begin{align*}
			\|\mathbf{X}(\omega)\|_{\gamma,[0,T]} \in \bigcap_{p\geq 1} \mathcal{L}^p(\Omega).
		\end{align*}
		\item [(iv)] We suppose that $\varphi$ admits a stationary point $Y$ such that
		\[
		\|Y(\omega)\|_{\gamma,[0,T]} \in \bigcap_{p\geq 1} \mathcal{L}^p(\Omega).
		\]
		\item [(v)] Assumption \ref{ASSVVNN} holds.
		\item [(vi)] We suppose that the assumptions imposed on $V_0$ and $V_1$ in Proposition \ref{DDSD} and Proposition \ref{ZXZ} are satisfied.
	\end{itemize}
\end{assumption}
\begin{remark}
	Fractional Brownian motions serve as prototypical examples of rough paths that satisfy the assumptions outlined above.
\end{remark}
\begin{remark}
	As stated earlier, all the results in this paper are also valid for the case where \( \frac{1}{3} < \gamma \leq \frac{1}{2} \). However, we focus on the more challenging regime \( \frac{1}{4} < \gamma \leq \frac{1}{3} \).
\end{remark}
\subsection{Invariant manifolds}
	Naturally, we expect that when we linearize the cocycle around the stationary point, the dynamical behavior of this linear cocycle allows us to locally analyze the behavior of the solutions. Before making these statements rigorous, let us first state the following result, which is a direct consequence of the multiplicative ergodic theorem.
\begin{proposition}\label{METT}
	For $t\geq 0$,  assume that \(\psi^{t}_{\omega} \coloneqq D_{Y_{\omega}}\varphi^{t}_{\omega} : \mathbb{R}^m \to \mathbb{R}^m\). Then, on a set of full measure \(\tilde{\Omega}\), invariant under \((\theta_t)_{t\in\mathbb{R}}\), there exists a sequence of deterministic values \(\mu_k < \ldots < \mu_1\), \(\mu_i \in [-\infty, \infty)\) (Lyapunov exponents), and \(m_i\)-dimensional subspaces \(H^i_\omega \subset \mathbb{R}^m\) such that:
	\begin{itemize}
		\item[(i)]	$\mathbb{R}^{m}=\bigoplus_{1\leq i\leq k} H^{i}_{\omega}$.
		\item[(ii)] For every \(1\leq i\leq k\), we have \(\psi^{t}_{\omega}( H^{i}_{\omega}) = H^{i}_{\theta_t\omega}\).
		\item[(iii)] \(\displaystyle\lim_{t\to \pm\infty} \frac{1}{t} \log\Vert \psi^{t}_{\omega}(\xi_{\omega})\Vert = \pm \mu_{i}\)
		if and only if \(\xi_{\omega} \in H^{i}_{\omega} \setminus \lbrace 0\rbrace\),
		where for \(t<0\), we set \(\psi^{t}_{\omega}(\xi_{\omega}) \coloneqq (\psi^{t}_{\omega})^{-1}(\xi_{\omega})\).
		
	\end{itemize}
\end{proposition}
\begin{proof}
	Fix \(t_0>0\), and let \(\log^{+}(x) = \max\lbrace\log x,0\rbrace\). From Proposition \ref{DDSD} and Lemma \ref{LLMMNNBB}, there exists a polynomial (depending on \(t_0\)) such that
	\begin{align*}
		&\max\bigg\lbrace\sup_{0\leq t\leq t_{0}}\log^{+}\big(\Vert \psi^{t}_{\omega}\Vert_{\mathcal{L}(\mathbb{R}^{m},\mathbb{R}^{m})}\big),\sup_{0\leq t\leq t_{0}}\log^{+}\big(\Vert (\psi^{t}_{\omega})^{-1}\Vert_{\mathcal{L}(\mathbb{R}^{m},\mathbb{R}^{m})}\big)\bigg\rbrace
		\\&\quad\leq Q_{1}(\Vert Y_\omega\Vert,\Vert \mathbf{X}(\omega)\Vert_{\gamma,[0,t_0]})\in\mathcal{L}^{1}(\Omega).
	\end{align*}
	Also, from Proposition \ref{DDSD},
	\begin{align}\label{BNMMK}
		\begin{split}
			&\sup_{0\leq t\leq t_0}\log^{+}\big(\Vert\psi^{t_0-t}_{\theta_t\omega}\Vert_{\mathcal{L}(\mathbb{R}^{m},\mathbb{R}^{m})}\big)=\sup_{0\leq t\leq t_0}\log^{+}\big(\Vert D_{Y_{\theta_t\omega}}\varphi^{t-t_0}\Vert_{\mathcal{L}(\mathbb{R}^{m},\mathbb{R}^{m})}\big)\\&\quad\leq\sup_{0\leq t\leq t_0} Q_{1}(\Vert Y_{\theta_t\omega}\Vert,\Vert \mathbf{X}(\omega)\Vert_{\gamma,[0,t_0]}).
		\end{split}
	\end{align}
	Since \(\varphi^{t}_{\omega}(Y_\omega) = Y_{\theta_t\omega}\), from \eqref{BNNMM},
	\begin{align}\label{475k}
		\sup_{0\leq t\leq t_ 0}\Vert Y_{\theta_t\omega}\Vert\leq Q(\Vert Y_\omega\Vert,\Vert\mathbf{X}\Vert_{\gamma,[0,t_0]}).
	\end{align}
	From \eqref{BNMMK} and \eqref{475k}, we conclude that
	\[
	\sup_{0\leq t\leq t_0} \log^{+}\!\bigl(\Vert \psi^{t_0-t}_{\theta_t\omega} \Vert_{\mathcal{L}(\mathbb{R}^{m},\mathbb{R}^{m})} \bigr)
	\]
	can be bounded by a polynomial function of \(\Vert Y_\omega\Vert\) and \(\Vert \mathbf{X} \Vert_{\gamma,[0,t_0]}\). Therefore
	\begin{align*}
		\sup_{0\leq t\leq t_0}\log^{+}\big(\Vert\psi^{t_0-t}_{\theta_t\omega}\Vert_{\mathcal{L}(\mathbb{R}^{m},\mathbb{R}^{m})}\big)\in\mathcal{L}^{1}(\Omega).
	\end{align*}
	The claims then follow from \cite[Theorem 3.4.11]{Arn98} or \cite[Theorem 1.21]{GVR23}.
\end{proof}
	This motivates the following definition:
	\begin{definition}
		We set (whenever these subspaces can be defined)		\begin{align}\label{linn}
			S_{\omega} \coloneqq \bigoplus_{i: \mu_{i}<0}H_{\omega}^{i}, \qquad  U_{\omega} \coloneqq \bigoplus_{i: \mu_{i}>0}H_{\omega}^{i}, \quad \text{and} \quad C_{\omega} \coloneqq H^{i_{c}}_{\omega} \quad \text{where} \quad \mu_{i_c}=0. 
		\end{align}
	\end{definition}
	\begin{remark}\label{STATEGY}
		As we will see in the sequel, there is a correspondence between the linear spaces defined in the preceding definition and the types of invariant manifolds. Specifically, the linear space \( S_{\omega} \) corresponds to the stable manifold, \( U_{\omega} \) to the unstable manifold, and \( C_{\omega} \) to the center manifold.  Although these manifolds are structurally distinct (as we will explain later), the primary technical conditions required for their existence are quite similar. Once these conditions are verified, we can invoke abstract results to directly establish the existence of these manifolds. Proposition~\ref{ZXZ} provides the essential tools for verifying these conditions.
		Let us outline the main strategy that is sufficient to establish the existence of these manifolds. We fix \( t_0 > 0 \) and define
		\[
		P_{\omega}(z) := \varphi^{t_0}_{\omega}(z + Y_\omega) 
		- \varphi^{t_0}_{\omega}(Y_\omega) 
		- \psi^{t_0}_{\omega}(z).
		\]
		We then verify that
		\begin{align} \label{eqn:diff_bound_P}
			\Vert P_{\omega}(z_2) - P_{\omega}(z_1) \Vert 
			\leq \Vert z_2 - z_1 \Vert \cdot f(\omega) \cdot h\!\bigl(\Vert z_2 \Vert + \Vert z_1 \Vert\bigr),
		\end{align}
		almost surely, where \( f: \Omega \rightarrow \mathbb{R}^{+} \) is a measurable function satisfying
		\[
		\lim_{n \rightarrow \infty} \frac{1}{n} \log^{+} f(\theta_{n t_0} \omega) = 0 
		\quad \text{almost surely},
		\]
		and \( h(x) = x^{r} g(x) \) for some \( r > 0 \), with \( g: \mathbb{R} \rightarrow \mathbb{R}^{+} \) an increasing \( C^{1} \) function. 
		Once these conditions are verified, the abstract results of \cite{GVR23} (for the stable and unstable manifolds) and \cite{GVR25B} (for the center manifold) are robust enough to guarantee the existence of such manifolds.
	\end{remark}
We now state the first result concerning the existence of stable manifolds.
\begin{theorem}[Local stable manifolds]\label{stable_manifold}
Suppose that in Proposition~\ref{METT}, we have \( \mu_i < 0 \) for some \( k \leq i \leq 1 \). Fix an arbitrary time step \( t_{0} > 0 \) and let \( 0 < \nu < -\mu^{-} \), where
\[
\mu^{-} := \max \bigl\lbrace \mu_{i} : \mu_{i} < 0 \bigr\rbrace.
\]
Then there exists a $\theta_{t_0}$-invariant subset $\tilde{\Omega} \subseteq \Omega$ of full measure, and a family of immersed submanifolds $S^{\nu}_{\text{loc}}(\omega) \subset \mathbb{R}^{m}$ such that for every $\omega \in \tilde{\Omega}$, the following properties hold:
\begin{itemize}
	\item [(i)] There exist two positive and finite random variables $\rho^{\nu}_{1}(\omega), \rho^{\nu}_{2}(\omega)$ such that
	\begin{align*}
		\liminf_{p \to \infty} \frac{1}{p} \log \rho_i^{\nu}(\theta_{pt_0}\omega)\geq 0,\  i = 1,2
	\end{align*}
	and 
	\begin{align}\label{UIOPL}
		\begin{split}
			\big{\lbrace} z \in \mathbb{R}^{m} \, :\, \sup_{n\geqslant 0}\exp(nt_0\nu)&\Vert \varphi^{nt_0}_{\omega}(z) - Y_{\theta_{nt_0}\omega}\Vert <\rho_{1}^{\nu}(\omega)\big{\rbrace}\subseteq S^{\nu}_{loc}(\omega)\\&\subseteq \big{\lbrace} z \in \mathbb{R}^{m} \, : \sup_{n\geqslant 0}\exp(nt_0\nu)\Vert \varphi^{nt_0}_{\omega}(z) - Y_{\theta_{nt_0}\omega}\Vert<\rho_{2}^{\nu}({\omega})\big{\rbrace}.
		\end{split}
	\end{align}
	\item [(ii)]  $T_{Y_\omega}S^{\nu}_{loc}(\omega)=S_{\omega}$ and for $n\geq N(\omega):\varphi^{nt_0}_{\omega}(S^{\nu}_{loc}(\omega))\subset S^{\nu}_{loc}(\theta_{nt_0}\omega)$. 
	\item [(iii)] For $0 < \nu_{1} \leq \nu_{2} < -\mu^{-}$, we have $S^{\nu_{2}}_{loc}(\omega) \subseteq S^{\nu_{1}}_{loc}(\omega)$, and for $n \geq N(\omega)$,
	\[
	\varphi^{nt_0}_{\omega}(S^{\nu_{1}}_{loc}(\omega)) \subseteq S^{\nu_{2}}_{loc}(\theta_{nt_0}\omega).
	\]
	Therefore, for every $z \in S^{\nu}_{loc}(\omega)$,
	\[
	\limsup_{n \to \infty} \frac{1}{n} \log \Vert \varphi^{nt_0}_{\omega}(z) - Y_{\theta_{nt_0}\omega} \Vert \leq t_0 \mu^{-}.
	\]
	Moreover,
	\begin{align*}
		\limsup_{n\rightarrow\infty} \frac{1}{n} \log\bigg{[}\sup\bigg{\lbrace}\frac{\Vert\varphi^{nt_0}_{\omega}(\tilde{z}) - \varphi^{nt_0}_{\omega}(z)  \Vert }{\Vert \tilde{z}-z\Vert},\ \ \tilde{z} \neq {z},\ \ \text{and}\ \  \tilde{z},{z}\in S^{\nu}_{loc}(\omega) \bigg{\rbrace}\bigg{]}\leq t_0\mu^{-}.
	\end{align*}
\end{itemize}
\end{theorem}
\begin{proof}
Following the explanation in Remark~\ref{STATEGY}, our aim is to apply \cite[Theorem 2.10]{GVR23}. Define
\begin{align*}
	P_{\omega}({z}):=\varphi^{t_0}_{\omega}(z+Y_\omega)-\varphi^{t_0}_{\omega}(Y_\omega)-\psi^{t_0}_{\omega}(z).
\end{align*}
For the same $\kappa$ as in Proposition \ref{ZXZ}, set
\begin{align*}
	T(z_1,z_0):=\max\lbrace \Vert z_1\Vert+\Vert z_0\Vert,\Vert z_1\Vert^{r_1}+\Vert z_0\Vert^{r_1},\Vert z_1\Vert^{p-4}+\Vert z_0\Vert^{p-4},\Vert z_1\Vert^{1-\kappa}+\Vert z_0\Vert^{1-\kappa}\rbrace.
\end{align*}
From Proposition \ref{ZXZ}, there exist a polynomial $\tilde{Q}$ and an increasing $C^1$ function $g : \mathbb{R} \to (0, \infty)$ such that
\begin{align*}
	&\big\Vert P_{\omega}(z_1)-P_{\omega}(z_0)\big\Vert\leq\int_{0}^{1}\big\Vert (D_{\theta z_1+(1-\theta)z_0+Y_\omega}\varphi^{t_0}_{\omega}-D_{Y_\omega}\varphi^{t_0}_{\omega}\big)[z_1-z_0]\big\Vert\mathrm{d}\theta\\&\quad\leq \exp\big(\tilde{Q}(\Vert Y_\omega\Vert,\Vert\mathbf{X}(\omega)\Vert_{\gamma,[0,t_0]})\big) g(\Vert z_1\Vert+\Vert z_0\Vert)T(z_1,z_0)\Vert z_1-z_0\Vert.
\end{align*}
From our assumption,
\begin{align*}
	f(\omega)=\log\big(\exp\big(\tilde{Q}(\Vert Y_\omega\Vert,\Vert\mathbf{X}(\omega)\Vert_{\gamma,[0,t_0]})\big)\big)=\tilde{Q}(\Vert Y_\omega\Vert,\Vert\mathbf{X}(\omega)\Vert_{\gamma,[0,t_0]})\in\mathcal{L}^{1}(\Omega).
\end{align*}
Hence, by Birkhoff's ergodic theorem, on a set of full measure,
\begin{align*}
	\lim_{n\rightarrow\infty} \frac{1}{n} \log^{+}f(\theta_{nt_0}\omega)=0.
\end{align*}
Therefore, we can apply \cite[Theorem 2.10]{GVR23} to obtain the result.	
\end{proof}
\begin{remark}\label{cocnnn}
A natural question is whether a continuous-time version of Theorem \ref{stable_manifold} can be deduced. It turns out that a slightly weaker result can be obtained for the continuous-time case. We briefly outline the procedure. Let $0 \leq t_1 < t_0$ and $z \in S^{\nu}_{loc}(\omega)$. By the cocycle property,
\begin{align*}
	&\varphi^{nt_0+t_1}_{\omega}(z)-\varphi^{nt_0+t_1}_{\omega}(Y_{\omega})=\varphi^{t_1}_{\theta_{nt_0}\omega}(\varphi^{nt_0}_{\omega}(z))-\varphi^{t_1}_{\theta_{nt_0}\omega}(\varphi^{nt_0}_{\omega}(Y_{\omega})).
\end{align*} 
Consequently, from Corollary \ref{difference},	\begin{align}\label{continous_part}
	\begin{split}
		&\sup_{0\leq t_1< t_0}\Vert \varphi^{nt_0+t_1}_{\omega}(z)-\varphi^{nt_0+t_1}_{\omega}(Y_{\omega})\Vert\\ \quad&\leq \Vert \varphi^{nt_0}_{\omega}(z)-Y_{\theta_{nt_0}\omega}\Vert \exp\big(Q_2(\Vert \varphi^{nt_0}_{\omega}(z)\Vert,\Vert Y_{\theta_{nt_0}\omega}\Vert,\Vert\mathbf{X}(\theta_{nt_0}\omega)\Vert_{\gamma,[0,t_0]})\big).
	\end{split}
\end{align}
Recall that $z \in S^{\nu}_{loc}(\omega)$ and
\begin{align*}
	\Vert \varphi^{nt_0}_{\omega}(z)\Vert\leq \Vert \varphi^{nt_0}_{\omega}(z)-Y_{\theta_{nt_0}\omega}\Vert+\Vert Y_{\theta_{nt_0}\omega}\Vert.
\end{align*}
Thus, by Birkhoff's ergodic theorem, on a set of full measure,	\begin{align}\label{NMMMKKIIII}
	\lim_{n\rightarrow\infty}\frac{1}{n}Q_2(\Vert \varphi^{nt_0}_{\omega}(z)\Vert,\Vert Y_{\theta_{nt_0}\omega}\Vert,\Vert\mathbf{X}(\theta_{nt_0}\omega)\Vert_{\gamma,[0,t_0]})= 0 .
\end{align}
Let $t > 0$, with $t = m t_0 + t_1$, where $0 \leq t_1 < t_0$ and $0 < \nu_1 < \nu$. Then,
\begin{align}\label{CXZAQWE}
	\begin{split}
		&\sup_{n\geq 0}\exp(nt_0\nu_1)\Vert\varphi^{nt_0}_{\theta_{t}\omega}(\varphi^{t}_{\omega}(z))-Y_{\theta_{nt_0+t}\omega}\Vert\leq \big(\sup_{k\geq 0}\exp(kt_0\nu)\Vert\varphi^{kt_0}_{\omega}(z)-Y_{\theta_{kt_0}\omega}\Vert\big)\\
		&\times \underbrace{\sup_{n\geq 0}\bigg(\exp(-(m+n)t_{0}\nu+nt_{0}\nu_{1})\exp\big(Q_2(\Vert \varphi^{(m+n)t_0}_{\omega}(z)\Vert,\Vert Y_{\theta_{(m+n)t_0}\omega}\Vert,\Vert\mathbf{X}(\theta_{(m+n)t_0}\omega)\Vert_{\gamma,[0,t_0]})\big)\bigg)}_{I(\omega,t,z)}.
	\end{split}
\end{align}
%
Since $0 < \nu_1 < \nu$, it follows from \eqref{NMMMKKIIII} that for sufficiently large $t > t(\omega)$, the term $I(\omega, t, z)$ can be made arbitrarily small. Hence, from \eqref{UIOPL},

\begin{align*}
	\varphi^{t}_{\omega}(S^{\nu_{1}}_{loc}(\omega))\subseteq S^{\nu}_{loc}(\theta_{t}\omega).
\end{align*}
\end{remark}
\begin{remark}
	In the previous remark, we justified that when we discretize the cocycle and use the existence of invariant stable manifolds for the discretized cocycle, we can indeed derive a continuous-time version of the invariant stable manifolds (albeit in a weaker sense). This is possible primarily due to Corollary \ref{difference}, which plays a crucial role in the argument.
	Specifically, during the estimation in \eqref{continous_part}, it ensures that the solution in a neighbourhood of the stationary point does not grow excessively in the time intervals between successive discrete times. More precisely, the term
	\[
	\exp\Big(
	Q_2\big(
	\Vert \varphi^{nt_0}_{\omega}(z) \Vert,
	\Vert Y_{\theta_{nt_0}\omega} \Vert,
	\Vert \mathbf{X}(\theta_{nt_0}\omega) \Vert_{\gamma, [0, t_0]}
	\big)
	\Big)
	\]
	turns out to be a tempered random variable. Therefore, since we are lying within the stable manifolds at the discrete times, the solution approaches the stationary point exponentially. Consequently, this additional bound does not pose a significant problem and can be effectively controlled over large time horizons.
\end{remark}
The next result concerns the existence of unstable manifolds.
\begin{theorem}[Local unstable manifolds]\label{Local unstable manifolds}
Suppose that in Proposition~\ref{METT}, we have \( \mu_{1} > 0 \). Let \( 0 < \nu < \mu^{+} \), where
\[
\mu^{+} := \min \bigl\lbrace \mu_{i} : \mu_{i} > 0 \bigr\rbrace,
\]
and fix an arbitrary time step \( t_{0} > 0 \). Then there exists a $\theta_{t_0}$-invariant subset $\tilde{\Omega} \subseteq \Omega$ of full measure, and a family of immersed submanifolds $U^{\nu}_{loc}(\omega)$ of $\mathbb{R}^{m}$ such that, for every $\omega \in \tilde{\Omega}$:
\begin{itemize}
	\item [(i)] There exist two positive and finite random variables $\tilde{\rho}^{\nu}_{1}(\omega), \tilde{\rho}^{\nu}_{2}(\omega)$ such that
	\[
	\liminf_{p \to -\infty} \frac{1}{p} \log \tilde{\rho}_i^{\nu}(\theta_{pt_0} \omega) \geq 0, \quad i = 1,2,
	\]
	and
	\begin{align*}
		&\bigg{\lbrace} z_\omega \in \mathbb{R}^{m} \, :\,\exists \lbrace z_{\theta_{-nt_{0}}\omega}\rbrace_{n\geq 1} \text{ s.t. } \varphi^{mt_{0}}_{\theta_{-nt_{0}}\omega}(z_{\theta_{-nt_{0}}\omega}) = z_{\theta_{(m-n)t_{0}}\omega} \text{ for all } 0 \leq m \leq n \text{ and }\\ &\sup_{n\geq 0}\exp(nt_0\nu)\Vert z_{\theta_{-nt_{0}}\omega} - Y_{\theta_{-nt_{0}}\omega} \Vert <\tilde{\rho}_{1}^{\nu}(\omega)\bigg{\rbrace}\subseteq U^{\nu}_{loc}(\omega)\subseteq \bigg{\lbrace} z_\omega \in \mathbb{R}^{m} \, :\,\exists \lbrace z_{\theta_{-nt_{0}}\omega}\rbrace_{n\geq 1} \text{ s.t. }\\& \varphi^{mt_{0}}_{\theta_{-nt_{0}}\omega}(z_{\theta_{-nt_{0}}\omega}) = z_{\theta_{(m-n)t_{0}}\omega} \text{ for all } 0 \leq m \leq n \text{ and }\sup_{n\geq 0}\exp(nt_0\nu)\Vert z_{\theta_{-nt_{0}}\omega} - Y_{\theta_{-nt_{0}}\omega} \Vert<\tilde{\rho}_{2}^{\nu}(\omega)\bigg{\rbrace}
	\end{align*}
	\item [(ii)]  $T_{Y_\omega}U^{\nu}_{loc}(\omega)=U_{\omega}$ and for $n\geq N(\omega): U^{\nu}_{loc}(\omega)\subset \varphi^{nt_0}_{\theta_{-nt_0}\omega}(U^{\nu}_{loc}(\theta_{-nt_0}\omega))$. 
	\item [(iii)] For $0 < \nu_{1} \leq \nu_{2} < \mu^{+}$, we have $U^{\nu_{2}}_{loc}(\omega) \subseteq U^{\nu_{1}}_{loc}(\omega)$, and for $n \geq N(\omega)$,
	\[
	U^{\nu_{1}}_{loc}(\omega) \subseteq \varphi^{nt_0}_{\theta_{-nt_0}\omega}(U^{\nu_{2}}_{loc}(\theta_{-nt_0}\omega)).
	\]
	Therefore, for every $z_\omega \in U^{\nu}_{loc}(\omega)$,
	\[
	\limsup_{n \rightarrow \infty} \frac{1}{n} \log \Vert z_{\theta_{-nt_0}\omega} - Y_{\theta_{-nt_0}} \Vert \leq -t_0 \mu^{+}.
	\]
	In addition,
	\begin{align*}
		\limsup_{n\rightarrow -\infty} \frac{1}{n} \log\bigg{[}\sup\bigg{\lbrace}\frac{\Vert\tilde{z}_{\theta_{-nt_0}\omega} - {z}_{\theta_{-nt_0}\omega}  \Vert }{\Vert \tilde{z}_\omega-z_\omega\Vert},\ \ \tilde{z}_\omega \neq z_\omega,\ \ \text{and}\ \ \tilde{z}_\omega,z_\omega\in U^{\nu}_{loc}(\omega) \bigg{\rbrace}\bigg{]}\leq -t_0\mu^{+}.
	\end{align*}
\end{itemize}
\end{theorem}
\begin{proof}
	The proof is almost identical to that of Theorem~\ref{stable_manifold}. 
	We apply the same strategy (see Remark~\ref{STATEGY}), and then invoke \cite[Theorem~2.17]{GVR23} to obtain the result.
\end{proof}
\begin{remark}
Similar to Remark~\ref{cocnnn}, we can also obtain a continuous-time result for the unstable manifold. For $z_\omega \in U^{\nu}_{\mathrm{loc}}(\omega)$, let $\lbrace z_{\theta_{-nt_0}\omega} \rbrace_{n \geq 1}$ be the corresponding sequence from item~(i) of Theorem~\ref{Local unstable manifolds}. Define
\begin{align*}
	\text{for} \ \  nt_0\leq t<(n+1)t_0: \ \ \ z_{\theta_{-t}\omega}:=\varphi^{(n+1)t_0-t}_{\theta_{-(n+1)t_0}\omega}(z_{\theta_{-(n+1)t_0}\omega}). 
\end{align*}
From Corollary \ref{difference},
\begin{align*}
	&\Vert z_{\theta_{-t}\omega} - Y_{\theta_{-t}\omega} \Vert = \Vert\varphi^{(n+1)t_0-t}_{{-(n+1)t_0}\omega}(z_{\theta_{-(n+1)t_0}\omega})-\varphi^{(n+1)t_0-t}_{\theta_{-(n+1)t_0}\omega}(Y_{\theta_{-(n+1)t_0}\omega})\Vert\\
	&\qquad \leq\Vert z_{\theta_{-(n+1)t_0}\omega}-Y_{\theta_{-(n+1)t_0}\omega}\Vert \exp\big(Q_2(\Vert z_{\theta_{-(n+1)t_0}\omega}\Vert,\Vert Y_{\theta_{-(n+1)t_0}\omega}\Vert,\Vert\mathbf{X}(\theta_{-(n+1)t_0}\omega)\Vert_{\gamma,[0,t_0]})\big).
\end{align*}
Thanks to Birkhoff's ergodic theorem, on a set of full measure, one concludes that
\begin{align*}
	\lim_{n \rightarrow \infty} \frac{1}{n} Q_2\left( \| z_{\theta_{-(n+1)t_0} \omega} \|, \| Y_{\theta_{-(n+1)t_0} \omega} \|, \| \mathbf{X}(\theta_{-(n+1)t_0} \omega) \|_{\gamma,[0,t_0]} \right) = 0.
\end{align*}
Consequently, by an argument similar to \eqref{CXZAQWE} and the lines below, if $\nu_1 < \nu$ and $t \geq t(\omega)$, we have
\begin{align*}
	U^{\nu_1}_{loc}(\omega)\subseteq \varphi^{t}_{\theta_{-t}\omega}(U^{\nu}_{loc}(\theta_{-t}\omega).
\end{align*}
\end{remark}
\begin{remark}
	Theorem \ref{Local unstable manifolds} should be interpreted in the following way: on the unstable manifold, at the current state, one can move backward in negative time and approach the stationary point arbitrarily closely. 
	Due to the way the unstable manifold is formulated, we can drop the assumption that the cocycle must be injective. This is particularly important in the context of cocycles defined on Banach spaces, where injectivity is not necessarily guaranteed (cf.~\cite{GVR21, GVR24, GVR25A}).
\end{remark}
Our final result concerns the existence of center manifolds.
\begin{theorem}[Local center manifolds]\label{center_manifold} 
	In Proposition~\ref{METT}, suppose that for some index \(1 \leq i_c \leq k\), we have \(\mu_{i_c} = 0\). 
	Fix an arbitrary time step \(t_0 > 0\) and assume
	\[
	0 < \nu < \min\!\bigl\lbrace \mu_{i_c - 1}, \, -\mu_{i_c + 1} \bigr\rbrace,
	\]
	with the convention that \(\mu_0 = \infty\) if \(i_c = 1\).
	Let $\mathbb{N}_0 t_0 = \bigl\{ n t_0 : n \in \mathbb{N}_0 = \mathbb{N} \cup \lbrace 0 \rbrace \bigr\}.$
	Then there exists a \(\theta_{t_0}\)-invariant subset \(\tilde{\Omega} \subseteq \Omega\) of full measure, a continuous cocycle
	\[
	\bar{\varphi} \colon \mathbb{N}_0 t_0 \times \tilde{\Omega} \times \mathbb{R}^m \to \mathbb{R}^m,
	\]
	and a positive random variable \(\rho^c \colon \tilde{\Omega} \to (0, \infty)\) such that
	\[
	\liminf_{n \to \pm\infty} \frac{1}{n} \log \rho^c\!\bigl(\theta_{n t_0} \omega\bigr) \geq 0.
	\]
	Moreover, for the functions \(\rho^c\) and \(\bar{\varphi}\), we have
	\[
	\| z - Y_\omega \| \leq \rho^c(\omega)
	\implies
	\bar{\varphi}^{t_0}_\omega(Y_\omega + z)
	= \varphi^{t_0}_\omega(Y_\omega + z).
	\]
	In addition, for every \(\omega \in \tilde{\Omega}\), there exists a function
	\[
	h^c_\omega \colon C_\omega \to \mathcal{M}^{c,\nu}_\omega \subset \mathbb{R}^m,
	\]
	such that:
	\begin{itemize}
		\item[(i)] $h^c_{\omega}$ is a homeomorphism and is Lipschitz continuous.
		\item[(ii)] For any $k \in \mathbb{N}$, if the map $\varphi^{t_0}_{\omega}$ is of class $C^k$, then the manifold $\mathcal{M}^{c,\nu}_{\omega}$ is of class $C^{k-1}$.
		\item[(iii)] $\mathcal{M}^{c,\nu}_{\omega}$ is $\bar{\varphi}$-invariant; i.e., for every $n \in \mathbb{N}_0$, we have
		\[
		\bar{\varphi}^{nt_0}_{\omega}(\mathcal{M}^{c,\nu}_{\omega}) \subseteq \mathcal{M}^{c,\nu}_{\theta_{nt_0}\omega}.
		\]
		\item[(iv)] For every $z_\omega \in \mathcal{M}^{c,\nu}_{\omega}$, there exists a sequence $\lbrace z_{\theta_{-nt_0}\omega} \rbrace_{n \geq 1}$ such that, if we define
		\begin{align*}
			j\leq 0:\ \  \bar{\varphi}^{jt_0}_{\omega}(z_\omega):=z_{\theta_{jt_0}\omega},
		\end{align*}
		then 
		\begin{align*}
			\forall(k,j)\in\mathbb{N}_{0}\times\mathbb{Z}:\ \ \bar{\varphi}^{kt_0}_{\theta_{jt_0}\omega}(\bar{\varphi}^{jt_0}_{\omega}(z_\omega))=\bar{\varphi}^{(k+j)t_0}_{\omega}(z_\omega).
		\end{align*}
		Also,
		\begin{align*}
			\sup_{j\in\mathbb{Z}}\exp(-\nu\vert j\vert)\Vert\bar{\varphi}^{jt_0}_{\omega}(z_\omega)-Y_{\theta_{jt_0}\omega}\Vert<\infty .
		\end{align*}
	\end{itemize}
\end{theorem}
\begin{proof}
	Thanks to Remark~\ref{STATEGY}, we argue as in the proof of Theorem~\ref{stable_manifold}, and then apply \cite[Theorem~3.14]{GVR25B}.
\end{proof}
\begin{remark}
Assume that for \( \omega \in \tilde{\Omega} \), the function \( \varphi_{\omega} \) is \( C^{k} \). Then, by \cite[Remark~2.11, Remark~2.18]{GVR23} and \cite[Theorem~3.14]{GVR25B}, the corresponding invariant manifolds (stable, unstable, and center) are of class \( C^{k-1} \).
\end{remark}
\subsection{Exponential stability}\label{sec:stability}
The stable manifold chart is a local homeomorphism between \( S_\omega \) and \( S^{\nu}_{\text{loc}}(\omega) \). Therefore, when all the Lyapunov exponents are negative, it is natural to expect that, in a neighborhood of the stationary point, the solutions decay exponentially toward the stationary point. We refer to this result as \emph{local stability} and formalize it in the following corollary.
\begin{corollary}[Local stability]\label{stability}
Consider the situation of Theorem~\ref{stable_manifold}, with the additional assumption that \(\mu_1 < 0\). Then there exists a positive random variable \(R^{\nu}(\omega) > 0\) such that
\[
\liminf_{p\rightarrow\infty} \frac{1}{p}\log R^{\nu}(\theta_{pt_0}) \geq 0,
\]
and
\[
\bigl\lbrace z\in\mathbb{R}^m : \Vert z - Y_\omega\Vert < R^{\nu}(\omega) \bigr\rbrace 
= S^{\nu}_{\text{loc}}(\omega).
\]
In addition, on a set of full measure \(\tilde{\tilde{\Omega}} \subseteq \tilde{\Omega}\), 
for every \(0 < \nu_1 < \nu\) and every \(z \in \mathbb{R}^m\) satisfying 
\(\| z - Y_\omega \| < R^{\nu}(\omega)\), we have
\begin{align*}
	\sup_{t\geq 0}\exp(t\nu_1)\Vert\varphi^{t}_{\omega}(z)-Y_{\theta_{t}\omega} \Vert<\infty.
\end{align*}
\end{corollary}
\begin{proof}
First, we need to slightly modify two results from \cite{GVR23}. 
In that paper, the results are stated in a very general form; however, since we do not require such generality, we will adapt the notation to fit the context of our current manuscript. From our assumption, we have
\[
F_{\mu_1}(\omega) = \bigl\lbrace z \in \mathbb{R}^m : 
\limsup_{n \to \infty} \frac{1}{n t_0} \log \| \psi^{n t_0}_\omega(z) \| 
\leq \mu_1 \bigr\rbrace = S_\omega = \mathbb{R}^m.
\]
From \cite[Theorem 2.10]{GVR23},
\[
S^\nu_{\mathrm{loc}}(\omega) 
= \bigl\lbrace Y_\omega + \Pi^0\!\bigl(\Gamma(z)\bigr) 
: |z| < R^\nu(\omega) \bigr\rbrace,
\]
where 
\(\Pi^0 \colon \prod_{j \geq 0} \mathbb{R}^m \to \mathbb{R}^m\) 
is the projection onto the first component, and
\[
\Gamma \colon 
F_{\mu_1}(\omega) 
\cap \bigl\lbrace z \in F_{\mu_1}(\omega) : \| z \| < R^\nu(\omega) \bigr\rbrace 
\to \prod_{j \geq 0} \mathbb{R}^m
\]
is defined in \cite[Lemma 2.7]{GVR23}. Moreover, \(\Gamma\) is a fixed point of the map \(I\), cf.~\cite[Lemma 2.6]{GVR23}, 
that is,
\[
I\!\bigl(v, \Gamma(v)\bigr) = \Gamma(v).
\]
Recall that \(F_{\mu_1}(\omega) = \mathbb{R}^m\). 
Also, by the final formula in \cite[page 122]{GVR23}, we have
\[
\Pi^0\!\bigl(I(z, \Gamma(z))\bigr) 
= \Pi^0\!\bigl(\Gamma(z)\bigr) 
= z.
\]
Consequently, from \cite[Theorem 2.10]{GVR23}, it follows that 
\begin{align}\label{AZA}
	S^{\nu}_{loc}(\omega) = \left\lbrace Y_\omega+z \, :\,  \vert z\vert< R^{\nu}(\omega)\right\rbrace.
\end{align}
This proves the first claim. If we choose \(0 < \nu_1 < \nu\), 
then by arguing as in Remark~\ref{cocnnn}, we obtain
\begin{align*}
	\Vert z\Vert< R^{\nu}(\omega)\longrightarrow \sup_{t\geq 0}\exp(t\nu_1)\Vert\varphi^{t}_{\omega}(z)-Y_{\theta_{t}\omega}\Vert<\infty.
\end{align*}
\end{proof}
We now aim to formulate sufficient conditions under which we can guarantee that 
the largest Lyapunov exponent \(\mu_1\) is strictly negative.
\begin{lemma}\label{lemma:upper_bound_top_lyapunov}
For the first Lyapunov exponent, we have
\[
\mu_1 
\leq \frac{1}{n t_0} 
\int_\Omega \log \| \psi^{n t_0}_\omega \| \, \mathbb{P}(\mathrm{d}\omega),
\quad \forall n \in \mathbb{N}.
\]

\end{lemma}
\begin{proof}
From \cite[3.3.2 Theorem]{Arn98}, it follows that
\begin{align*}
	\mu_1 = \inf_{n \geq 1} \frac{1}{nt_0} \int_{\Omega}\log \Vert \psi^{nt_0}_{\omega}\Vert\ \mathbb{P}(\mathrm{d}\omega) \leq  \frac{1}{nt_0}\int_{\Omega}\log \Vert \psi^{nt_0}_{\omega}\Vert\ \mathbb{P}(\mathrm{d}\omega).
\end{align*}
\end{proof}
\begin{example}\label{STABLE_EXAMPLE}
Let Assumption~\ref{S_U_C} hold, and consider the equation
\begin{align}\label{VVVA}
	\mathrm{d}Z_t 
	= V(Z_t) \, \mathrm{d} \mathbf{X}_t(\omega) 
	+ V_0(Z_t) \, \mathrm{d}t, 
	\quad Z_0 = z_0 \in \mathbb{R}^m,
\end{align}
with the additional assumption that \(V_0(0) = 0\) and \(V(0) = 0\).
In this case, \(Y_\omega \equiv 0\) is a stationary point. To see this, note that the statement holds for smooth paths \(\mathbf{X}(\omega)\). Since \(\mathbf{X}(\omega)\) is a geometric rough path, we can approximate it by smooth paths, and the statement remains true in the limit. Now assume that \(D_0 V_0\) has only eigenvalues with strictly negative real parts. Let
\[
\lambda 
= \max \bigl\lbrace 
\operatorname{Re}(\mu) 
: \mu \ \text{is an eigenvalue of } D_0 V_0 
\bigr\rbrace.
\]
Then, for every fixed \(n \in \mathbb{N}\), we have
\[
\frac{1}{nt_0}\int_{\Omega} \log \|\psi^{nt_0}_{\omega}\| \, \mathbb{P}(\mathrm{d}\omega) \to \frac{1}{nt_0} \log\left(\|\exp(nt_0 D_{0} V_0)\|_{\mathcal{L}(\mathbb{R}^m, \mathbb{R}^m)}\right),
\]
as \( \|D_{0} V\|_{\mathcal{L}(\mathbb{R}^m\otimes\mathbb{R}^d,\mathbb{R}^m)} \to 0 \). This can be proven using Proposition \ref{DDSD} and the Dominated Convergence Theorem. Furthermore, we have

\[
\lim_{n \rightarrow \infty} \frac{1}{nt_0} \log\left(\|\exp(nt_0 D_{0} V_0)\|_{\mathcal{L}(\mathbb{R}^m, \mathbb{R}^m)}\right) = \lambda < 0.
\]
Thus, we can choose \(n\) sufficiently large such that
\[
\frac{1}{nt_0} \log\left(\|\exp(nt_0 D_{0} V_0)\|_{\mathcal{L}(\mathbb{R}^m, \mathbb{R}^m)}\right) < 0.
\]
Hence, from Lemma~\ref{lemma:upper_bound_top_lyapunov}, it follows that
\[
\mu_1 < 0,
\]
as
\[
\|D_0 V\|_{\mathcal{L}(\mathbb{R}^m \otimes \mathbb{R}^d, \mathbb{R}^m)} \to 0.
\]
In other words, we have shown that when 
\(\|D_0 V\|_{\mathcal{L}(\mathbb{R}^m \otimes \mathbb{R}^d, \mathbb{R}^m)}\) 
is small, the first Lyapunov exponent is negative. Consequently, by Corollary~\ref{stability}, 
we obtain local exponential stability around the origin.
\end{example}
In the next example, we discuss the existence of invariant manifolds.			\begin{example}
Reconsider equation~\eqref{VVVA} under the assumptions \(V_0(0) = 0\) and \(V(0) = 0\).  
Note that the trajectory \(Y_\omega = 0\) is the stationary solution.  
Regardless of the specific form of \(D_0 V_0\), the multiplicative ergodic theorem guarantees the existence of Lyapunov exponents.
Consequently, we can use our results to obtain invariant manifolds according to the decomposition in \eqref{linn}.
A simple example where a center manifold exists is when  \( V(0) = D_{0}V = 0 \) and the matrix \( D_{0}V_0 \) has at least one eigenvalue on the unit circle. Indeed, when \(V(0) = D_0 V = 0\) and \(V_0(0) = 0\), the Lyapunov exponents near zero are entirely determined by \(D_0 V_0\). In this case, if the real part of one of the eigenvalues is strictly positive (respectively, strictly negative), we can deduce the existence of unstable (respectively, stable) manifolds.
\end{example}
\begin{remark}
An example where a nontrivial stationary point is expected to exist 
is when \(\mathbf{X}\) is a Brownian motion and \(V_0\) is a linear drift 
whose eigenvalues have strictly negative real parts. 
In this case, a nontrivial stationary point can be represented 
as the solution to the following equation:
\begin{align*}
	Y_{t}(\omega) = \int_{-\infty}^{t} \exp((t-s)V_0) V(Y_{s}(\omega))  \, \mathrm{d}\mathbf{B}_{s}(\omega).
\end{align*}
\end{remark}
\subsection*{Acknowledgements}
\label{sec:acknowledgements}
Mazyar Ghani Varzaneh acknowledges support from DFG CRC/TRR 388 {\em Rough Analysis, Stochastic Dynamics and Related Fields}, Project A06. 
\bibliographystyle{alpha}
\bibliography{refs}

\end{document}